\documentclass{amsart}
\usepackage{amssymb,comment}

\addtolength{\hoffset}{-1.5cm}
\addtolength{\textwidth}{3cm}

\title[Geometric constructions of  representations of $GL_n$]{Lectures on geometric constructions of the irreducible representations of $GL_n$}

\author{Joel Kamnitzer}

\newcommand{\gl}{\mathfrak{gl}}
\newcommand{\Cx}{{\mathbb{C}^\times}}
\newcommand{\Z}{\mathbb{Z}}
\newcommand{\N}{\mathbb{N}}
\newcommand{\Sym}{\mathrm{Sym}}
\newcommand{\C}{\mathbb{C}}
\newcommand{\tf}{\mathfrak{h}}
\newcommand{\End}{\mathrm{End}}
\newcommand{\lc}{\mathcal{L}} 
\newcommand{\pb}{\mathbb{P}}
\newcommand{\Fl}{\mathrm{Fl}}
\newcommand{\cN}{\mathcal{N}}
\newcommand{\Gr}{\mathrm{Gr}}
\newcommand{\ogl}{\overline{\Gr^\lambda}}
\newcommand{\het}{\mathrm{ht}}

\newcommand{\perv}{\mathcal{P}}
\newcommand{\rep}{\mathrm{Rep}}
\DeclareMathOperator{\Hom}{Hom}
\newcommand{\tp}{{\mathrm{top}}}

\newtheorem{Theorem}{Theorem}[section]
\newtheorem{Proposition}[Theorem]{Proposition} 
\newtheorem{Lemma}[Theorem]{Lemma}

\newtheorem{Corollary}[Theorem]{Corollary}

\theoremstyle{definition}

\newtheorem{Example}[Theorem]{Example}
\newtheorem{Exercise}{Exercise}[section]

\begin{document}

\begin{abstract}
We give an exposition of three geometric constructions of the irreducible representations of $ GL_n$.  In particular, we discuss Borel-Weil theory, the Ginzburg construction, and the geometric Satake construction.  We also explain how to deduce the Ginzburg construction from the geometric Satake equivalence.  These are lecture notes for a lecture series at the Summer School on Geometric Representation Theory and Extended Affine Lie Algebras held at University of Ottawa in June 2009.
\end{abstract}

\maketitle
\tableofcontents

\section{Introduction} \label{J:se1}
In these notes, we will give three different geometric constructions of the irreducible representations of $ GL_n$.  

First, we discuss the Borel-Weil theorem, which uses line bundles on flag varieties.  The flag variety is a projective variety which parametrizes complete flags in a vector space.  The flag variety has an action of $ GL_n $ and this group also acts on the space of algebraic sections of natural line bundles.  These spaces of sections are the irreducible representation.

Second, we give Ginzburg's construction \cite{Gbook} of an action on the cohomology of $n$-step Springer fibres.  A Springer fibre parametrizes flags in a vector space which are invariant under the action of a nilpotent linear operator.  In this construction, we get an action of the generators of the Lie algebra of $ GL_n $ by convolution.

Finally, we describe the geometric Satake correspondence \cite{L,Gsat,MV}, which is a cornerstone of the geometric Langlands program.  In this approach, $ GL_n $ acts on the intersection homology of subvarieties of the affine Grassmannian -- which parametrizes certain infinite dimension subspaces of an infinite dimensional vector space.  

All three of these constructions generalize to arbitrary reductive groups.  The Borel-Weil construction works for the flag variety of any reductive group.  Ginzburg's construction has been generalized by Nakajima \cite{N} using quiver varieties.  Finally, the geometric Satake construction generalizes using the affine Grassmannian of the Langlands dual group \cite{Gsat,MV}.

In these lectures, we have chosen to focus on the case of $ GL_n$ for three reasons.  First, we can discuss the results without using too much of the terminology of reductive groups and root systems.  Second, flag varieties are much more concrete in this case.  Third, we are able to avoid introducing quiver varieties and the Langlands dual group.

The outline of these notes is as follows.  In the first section we give an overview of the basic facts concerning the representation theory of $ GL_n$.  In the second, third, and fourth sections we give the Borel-Weil, Ginzburg, and geometric Satake constructions.  In the final section we explain how to deduce the Ginzburg construction from the geometric Satake construction.  In general, we have endevoured to keep the exposition fairly elementary.  In particular, we have avoided using perverse sheaves until the final section.

I would like to thank all the participants of the summer school for their questions and comments.  I especially thank Bruce Fontaine and Lucy Zhang for reading an early draft of these notes.  I would also like to thank Erhard Neher and Alistair Savage for organizing this summer school.

\section{Representation theory of $GL_n$}\label{J:se2}
\subsection{Preliminaries and examples}\label{J:se3}
We will begin by reviewing the representation theory of the group $GL_n = GL_n(\mathbb{C})$.  First, note that we will consider algebraic (=holomorphic) representations of $ GL_n$.  Namely, a representation of $ GL_n $ is a finite-dimensional complex vector space $V $ along with a group homomorphism $ GL_n \rightarrow GL(V) $ which is also required be a morphism of algebraic varieties.

This is equivalent to considering smooth representations of its maximal compact subgroup $U(n)$.  This is also equivalent to studying finite-dimensional integral weight representations of its Lie algebra $ \gl_n$.  

The material of this section is standard.  Good references are \cite{B}, \cite{H}, and \cite{FH}.

\begin{Example} \label{J:ex1}
The simplest representation of $ GL_n $ is the ``standard representation'' which is the action of $GL_n $ on $\C^n$ (so $ V= \C^n $ and the map $ GL_n \rightarrow GL(V) $ is the identity).
\end{Example}

\begin{Example} \label{J:ex2}
More generally, we can also consider actions of $ GL_n $ on vector spaces built out of $ \C^n$.  The first examples are the actions of $ GL_n $ on the $k$th symmetric powers, $\Sym^k \C^n $, and $k$th exterior powers, $ \Lambda^k \C^n $.  The action of $ GL_n $ on $\Sym^k \C^n $ is given by
\begin{equation*}
g \cdot v_1 \cdots v_k = (g \cdot v_1) \cdots (g \cdot v_k)
\end{equation*}
and similarly for $ \Lambda^k\C^n$.
\end{Example}

\begin{Exercise} \label{J:ex3}
Identify $ \Sym^k \C^n $ with the vector space of homogeneous polynomials of degree $ k $ in $ n $ variables.  Consider the case $ n = 2$.  Describe the action of an invertible matrix $ \left[  \begin{smallmatrix} a b \\ c d \end{smallmatrix}\right]  $ on a homogeneous polynomial $p(x,y) $.
\end{Exercise}

\begin{Example} \label{J:ex4}
As a special case of the previous example, we may consider $ V = \Lambda^n \C^n $.  This is a 1-dimensional vector space.  Let $ e_1, \dots, e_n $ be the standard basis for $ \C^n $.  Then $ e_1 \wedge \cdots \wedge e_n $ is a basis for $ \Lambda^n \C^n $.

With respect to this basis, the action of $ GL_n $ becomes a group homomorphism $ GL_n \rightarrow GL_1 = \Cx$.  This homomorphism is the map taking a matrix to its determinant.  Hence we will refer to $\Lambda^n \C^n $ as the \textbf{determinant representation}.
\end{Example}

\subsection{Representations of tori} \label{J:se4}
Our goal now is to classify all representations of $ GL_n$.  

Let us begin with the case of $ GL_1 = \Cx$.  A one dimensional representation of $\Cx $ is given by a (algebraic) group homomorphism $ \Cx \rightarrow \Cx $.  All of these are of the form $ z \mapsto z^n $ for some integer $n$.  

More generally, let $ V$ be a representation of $ \Cx$.  We write 
\begin{equation*}
V_n = \{ v \in V: z \cdot v = z^n v, \text{ for all } z \in \C^\times \} 
\end{equation*}
for the subspace of vectors of weight $n$.  We have the following result.

\begin{Proposition} \label{J:thCxrep}
With the above notation, we have the direct sum decomposition $V = \oplus_{n \in \Z} V_n$.
\end{Proposition}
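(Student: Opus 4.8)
The plan is to show that $V$ decomposes as the claimed direct sum by diagonalizing the action of a generator. Since $V$ is a finite-dimensional algebraic representation of $\Cx$, every element $z \in \Cx$ acts by a linear operator $\rho(z) \in GL(V)$, and these operators all commute with one another (because $\Cx$ is abelian). First I would argue that the operator $\rho(z)$ is diagonalizable for a single well-chosen $z$: for instance, one can use that the matrix entries of $\rho(z)$ are Laurent polynomials in $z$, so the eigenvalues of $\rho(z)$ vary algebraically, and a direct argument with the functional equation $\rho(zw) = \rho(z)\rho(w)$ pins down their form.

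More concretely, here is the cleaner route I would actually carry out. Fix a basis and write $\rho(z)$ as a matrix whose entries are Laurent polynomials $a_{ij}(z)$, so that $\rho(z) = \sum_{n} A_n z^n$ for finitely many constant matrices $A_n$. Plugging this into the homomorphism identity $\rho(zw)=\rho(z)\rho(w)$ and comparing coefficients of monomials $z^m w^n$ yields $A_n A_m = 0$ for $m \neq n$ and $A_n^2 = A_n$; combined with $\rho(1) = \sum_n A_n = I$, this says the $A_n$ are orthogonal idempotents summing to the identity. Setting $V_n = \mathrm{image}(A_n)$ then gives $V = \oplus_{n} V_n$ as a vector space, and one checks directly that $z \cdot v = z^n v$ for $v \in V_n$, so $V_n$ is exactly the weight-$n$ subspace as defined. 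Since only finitely many $A_n$ are nonzero, all but finitely many $V_n$ vanish, and extending the sum over all of $\Z$ harmlessly gives the stated decomposition.

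The main obstacle is the input that $\rho$ is given by a \emph{morphism of algebraic varieties}, which is what licenses writing the matrix entries as Laurent polynomials (the coordinate ring of $\Cx$ being $\C[z,z^{-1}]$); without the algebraicity hypothesis the statement would fail, so the proof must genuinely use it. Everything after that point — the coefficient comparison, the idempotent bookkeeping, and identifying $\mathrm{image}(A_n)$ with $V_n$ — is routine linear algebra. An alternative, perhaps slicker, packaging would be to invoke that a commuting family of diagonalizable operators is simultaneously diagonalizable, but since here we effectively have a single operator's worth of information (the whole representation is determined by the Laurent expansion above), the idempotent argument is the most self-contained and I would present that one.
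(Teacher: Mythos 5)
Your argument is correct. The paper itself states this Proposition without proof (it is part of the standard background, deferred to the references), so there is nothing to compare line by line; but the route you take --- matrix entries of $\rho(z)$ are Laurent polynomials, so $\rho(z)=\sum_n A_n z^n$, and the identity $\rho(zw)=\rho(z)\rho(w)$ together with $\rho(1)=I$ forces $A_mA_n=\delta_{mn}A_n$ and $\sum_n A_n=I$ --- is exactly the standard proof of weight-space decomposition for algebraic representations of a torus, and it correctly isolates algebraicity as the essential hypothesis. One small point to make explicit: after showing $\mathrm{im}(A_n)\subseteq V_n$, also record the reverse inclusion, e.g.\ for $v\in V_n$ the identity $\sum_m A_m v\, z^m=\rho(z)v=z^n v$ for all $z\in\Cx$ gives $A_mv=0$ for $m\neq n$ and $A_nv=v$; with that, $V_n=\mathrm{im}(A_n)$ and the decomposition $V=\oplus_{n\in\Z}V_n$ follows as you state.
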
 

So we have written $ V$ as a direct sum of eigenspaces for the elements of $ \Cx$. This proposition captures two important facts about the representation theory of $ \C^\times$.  First, all representations are semisimple; meaning that they can be written as a direct sum of irreducible subrepresentations.  Second, all irreducible representations of $ \Cx $ are $1$-dimensional (this holds since $\Cx $ is abelian).  The terms $ V_n $ appearing the direct sum decomposition in Proposition \ref{J:thCxrep} are the \textbf{isotypic} components of the representation $ V $.  By definition, $V_n $ is the set of all vectors in $ V $ which lie in a subrepresentation isormorphic to the one-dimensional representation labelled by $ n$.

More generally, let us consider a group $ T = (\Cx)^n $.  Such a group is called a \textbf{torus}.  As before, we begin by studying the one dimensional representations of this group.  These are all of the form
\begin{equation*}
z = (z_1, \dots, z_n) \mapsto z^\mu := z_1^{\mu_1} \cdots z_n^{\mu_n}
\end{equation*}
for some sequence of integers $ \mu = (\mu_1, \dots, \mu_n) \in \Z^n$.  We will call $ \mu $ a \textbf{weight} of $ T $ and call the set $P := \Z^n $ the \textbf{weight lattice} of $ T$.  

Now let $ V $ be an arbitrary representation of $ T$.  For any weight $ \mu$, we can consider the subspace of vectors of weight $ \mu$
\begin{equation*}
V_\mu := \{ v \in V : z \cdot v = z^\mu v\}
\end{equation*}

\begin{Proposition} \label{J:th1}
We have the direct sum decomposition $ V = \oplus_{\mu \in \Z^n} V_\mu $
\end{Proposition}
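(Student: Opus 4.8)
The plan is to prove the statement by induction on $n$, the case $n = 1$ being exactly Proposition \ref{J:thCxrep}. Throughout, the crucial point is that the factors of $ T = (\Cx)^n $ commute with one another, so that a weight decomposition coming from one factor is respected by the action of the others.

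For the inductive step, write $ T = T' \times \Cx $, where $ T' = (\Cx)^{n-1} $ is the subgroup of the first $ n-1 $ coordinates and the last factor is the subgroup of the last coordinate. Restricting the $ T $-action to this last $ \Cx $, Proposition \ref{J:thCxrep} gives $ V = \oplus_{m \in \Z} W_m $, where $ W_m = \{ v \in V : (1, \dots, 1, z) \cdot v = z^m v \text{ for all } z \in \Cx \} $. I would then check that each $ W_m $ is stable under $ T' $: if $ v \in W_m $ and $ t' \in T' $, then for any $ z \in \Cx $ we have $ (1, \dots, 1, z) \cdot (t' \cdot v) = t' \cdot \big( (1, \dots, 1, z) \cdot v \big) = z^m (t' \cdot v) $, using that $ t' $ commutes with $ (1, \dots, 1, z) $ in $ T $; hence $ t' \cdot v \in W_m $. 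So each $ W_m $ is a finite-dimensional algebraic representation of $ T' \cong (\Cx)^{n-1} $, and the inductive hypothesis gives $ W_m = \oplus_{\nu \in \Z^{n-1}} (W_m)_\nu $.

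Finally I would assemble the pieces. Writing $ \mu = (\nu, m) \in \Z^{n-1} \times \Z = \Z^n $, one checks directly from the definitions that $ (W_m)_\nu = V_\mu $: a vector lies in $ (W_m)_\nu $ precisely when it transforms by $ z^m $ under the last coordinate and by $ z^\nu $ under the first $ n-1 $ coordinates, which together say exactly that it transforms by $ z^\mu $ under all of $ T $. Combining the two decompositions yields $ V = \oplus_{m \in \Z} \oplus_{\nu \in \Z^{n-1}} (W_m)_\nu = \oplus_{\mu \in \Z^n} V_\mu $, as desired.

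As for the main obstacle: there is essentially no hard content beyond Proposition \ref{J:thCxrep}, which supplies the genuine input (semisimplicity of $ \Cx $-representations). The only thing to handle carefully is the bookkeeping in the last step — verifying that the iterated weight spaces coincide with the weight spaces for the full torus — and one need not worry about convergence or infinitely many nonzero summands, since $ V $ is finite-dimensional. An alternative packaging, avoiding explicit induction, is to observe that the $ n $ coordinate subgroups act by commuting families of diagonalizable operators (each diagonalizable by Proposition \ref{J:thCxrep}), hence are simultaneously diagonalizable, with simultaneous eigenspaces precisely the $ V_\mu $; this is the same argument phrased differently.
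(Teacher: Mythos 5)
Your proof is correct. The paper itself states this proposition without proof (it is treated as a standard fact, with references to the textbooks cited in Section \ref{J:se3}), so there is no argument of the paper to compare against; your induction on the number of $\Cx$ factors, bootstrapping from Proposition \ref{J:thCxrep} and using commutativity of $T$ to see that each $W_m$ is $T'$-stable, is the standard argument, and your closing remark that this is equivalent to simultaneous diagonalization of commuting diagonalizable actions is also accurate.
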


\subsection{Weight spaces of representations of $ GL_n$} \label{J:se5}
Within $ GL_n$, there is a large torus $ T = (\Cx)^n $ consisting of the invertible diagonal matrices.  We will study representations of $ GL_n $ by restricting them to representations of this maximal torus.

Let $ V $ be a representation of $ GL_n$.  Then it is also a representation of the maximal torus $ T $ and so we get a weight decomposition $ V = \oplus_\mu V_\mu $.  We can record some numerical information about the representation by recording the dimensions of these weight spaces in the function
\begin{equation*}
\chi_V : \Z^n \rightarrow \N, \ \mu \mapsto \dim(V_\mu).
\end{equation*}
This function is called the \textbf{character} of the representation. 

Here is the first fundamental result about the representation theory of $ GL_n$.

\begin{Theorem} \label{J:th2}
\begin{enumerate}
\item Consider the action of the symmetric group $ S_n $ on $\Z^n $ given by $$ w (\mu_1, \dots, \mu_n) = (\mu_{w(1)}, \dots, \mu_{w(n)}). $$  If $ \chi_V $ is a character, then $ \chi_V (\mu) = \chi_V(w \mu) $ for any $ w \in S_n$.
\item If $ V, W $ are two representations and $ \chi_V = \chi_W$, then $V \cong W$.
\end{enumerate}
\end{Theorem}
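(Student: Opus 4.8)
The plan is to prove both statements by exploiting the interplay between the maximal torus $T$ and the Weyl group action, realized concretely inside $GL_n$. For part (i), the key observation is that $S_n$ embeds into $GL_n$ as the group of permutation matrices: the permutation matrix $\dot w$ associated to $w \in S_n$ normalizes $T$, and conjugation by $\dot w$ on $T$ realizes exactly the permutation action $z = (z_1,\dots,z_n) \mapsto (z_{w^{-1}(1)},\dots,z_{w^{-1}(n)})$ on the torus. First I would check that if $v \in V_\mu$, then $\dot w \cdot v \in V_{w\mu}$: for $z \in T$ we compute $z \cdot (\dot w \cdot v) = \dot w \cdot (\dot w^{-1} z \dot w) \cdot v = \dot w \cdot (z^{w^{-1}\mu} v) = z^{w\mu}(\dot w \cdot v)$, where one must be careful to match the index conventions so that the exponent comes out as $w\mu$ in the sense defined in the theorem statement. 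Since $\dot w$ is invertible, the map $v \mapsto \dot w \cdot v$ gives a linear isomorphism $V_\mu \xrightarrow{\sim} V_{w\mu}$, hence $\dim V_\mu = \dim V_{w\mu}$, which is precisely $\chi_V(\mu) = \chi_V(w\mu)$.

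For part (ii), the strategy is to reduce the statement about isomorphism of representations to the semisimplicity of $GL_n$ together with knowledge of which irreducibles appear. The argument I would give: $GL_n$ is reductive, so every finite-dimensional algebraic representation is semisimple, and hence $V \cong \bigoplus_\lambda L(\lambda)^{\oplus m_\lambda}$ and $W \cong \bigoplus_\lambda L(\lambda)^{\oplus n_\lambda}$, where $\{L(\lambda)\}$ ranges over the (countably many) isomorphism classes of irreducibles and the multiplicities are almost all zero. Restricting to $T$ and taking characters, $\chi_V = \sum_\lambda m_\lambda \chi_{L(\lambda)}$ and similarly for $W$. So it suffices to show that the characters $\{\chi_{L(\lambda)}\}$ of the irreducibles are linearly independent over $\Z$ (equivalently $\Q$) in the space of functions $\Z^n \to \Z$. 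Granting that linear independence, $\chi_V = \chi_W$ forces $m_\lambda = n_\lambda$ for all $\lambda$, hence $V \cong W$.

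The main obstacle is therefore the linear independence of irreducible characters, and this is where I expect the real work to lie — though I suspect the paper defers it, since it depends on the classification of irreducibles (by highest weight) which has not yet been developed in the excerpt. The cleanest route, once highest weights are available, is the standard triangularity argument: each $\chi_{L(\lambda)}$ has $\lambda$ as its unique maximal weight (with respect to the dominance order) appearing with nonzero coefficient, and distinct $L(\lambda)$ have distinct highest weights; ordering the $\lambda$'s compatibly with dominance makes the character matrix unitriangular, hence invertible, giving independence. Absent the highest-weight theory, an alternative is to invoke orthogonality of characters of the compact form $U(n)$ with respect to Haar measure — the restriction of an algebraic irreducible of $GL_n$ to $U(n)$ is an irreducible unitary representation, and irreducible unitary characters are orthonormal, hence linearly independent — but this trades elementary methods for analysis. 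I would flag that part (ii), as stated, really presupposes the forthcoming classification, so in a self-contained treatment one would either cite it forward or relegate the proof to after the highest-weight theorem is in place.
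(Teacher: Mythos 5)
Your proposal is correct and takes essentially the approach the paper intends: part (i) via permutation matrices normalizing $T$ is exactly the hint given in Exercise \ref{J:ex5} (with the convention $\dot w e_j = e_{w(j)}$ the exponent actually comes out as $w^{-1}\mu$ rather than $w\mu$, but since $w$ ranges over all of $S_n$ this bookkeeping slip, which you flag yourself, does not affect the conclusion $\dim V_\mu = \dim V_{w\mu}$). For part (ii) the paper supplies no proof, deferring to its standard references; your argument via semisimplicity plus linear independence of irreducible characters (by dominance-order triangularity of highest weights, or by orthogonality on $U(n)$) is a correct standard completion, and your remark that this logically presupposes the highest-weight classification stated later (Theorems \ref{J:th3} and \ref{J:th4}) is accurate.
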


Thus the character completely determines the representation.

\begin{Exercise} \label{J:ex5}
Prove (i).  Hint: consider the  inclusion $S_n \hookrightarrow GL_n $ which takes permutations to permutation matrices.  
\end{Exercise}

\begin{Example} \label{J:ex6}
Consider the representation $ \Sym^3 \C^2$.  It has a basis given by $$ e_1^3, e_1^2 e_2, e_1 e_2^2, e_2^3. $$
 Note that if $z = \left[ \begin{smallmatrix} z_1 &0 \\ 0 &z_2 \end{smallmatrix} \right]$, then $$ z \cdot e_1^a e_2^b = (z_1 e_1)^a (z_2e_2)^b = z_1^a z_2^b e_1^a e_2^b $$
and hence $e_1^a e_2^b $ has weight $ (a,b) $.  Hence the non-zero weight spaces of $ \Sym^3 \C^2 $ are $ (3,0), (2,1), (1,2), (0,3) $ and each weight space has dimension 1.
\end{Example}

\begin{Example} \label{J:ex7}
More generally consider the representation $ \Sym^k \C^n $.  It has a basis given by monomials $ e_1^{a_1} \dots e_n^{a_n}$ of total degree $ k$.  Such a monomial has weight $ (a_1, \dots, a_n) $.  Hence the character is the function 
\begin{equation*}
 \chi(\mu) = 
 \begin{cases}
 1  \text{ if }  \mu_1 + \dots + \mu_n = k \\
  0 \text{ otherwise.}
\end{cases}
\end{equation*}
In particular, the weights of this representation form a simplex.
\end{Example}

\begin{Exercise} \label{J:ex8}
Find the character of the representation $ \Lambda^k \C^n $.
\end{Exercise}

\subsection{Irreducible representations of $ GL_n$} \label{J:se6}
A \textbf{subrepresentation} $ W\subset V $ of a representation of $GL_n$ is a subspace of $ W \subset V $ which is invariant under the action of $ GL_n $.

A representation $ V$ of $ GL_n $ is called \textbf{irreducible} if it has no subresentations (other than $ 0, V$).  We have the following semisimplicity result.

\begin{Theorem} \label{J:th3}
Every representation $ V $ can be written as a direct sum of irreducible subresentations.
\end{Theorem}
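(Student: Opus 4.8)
The plan is to use Weyl's unitary trick, exploiting the maximal compact subgroup $U(n) \subset GL_n$ mentioned in Subsection \ref{J:se3}. First I would reduce the statement to the following splitting property: for every subrepresentation $W \subset V$ there is a subrepresentation $W' \subset V$ with $V = W \oplus W'$. Granting this, the theorem follows by a straightforward induction on $\dim V$: if $V \neq 0$ is not already irreducible, pick a proper nonzero subrepresentation $W$, split off a complement $W'$, and apply the inductive hypothesis to $W$ and to $W'$ separately.

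To produce the complement, I would put a $U(n)$-invariant Hermitian inner product on $V$. Starting from an arbitrary Hermitian inner product $( \cdot , \cdot )_0$ on $V$, I average it over $U(n)$ against the normalized Haar measure $dg$, setting
\[
\langle v, w \rangle := \int_{U(n)} ( g \cdot v, \, g \cdot w )_0 \, dg .
\]
This integral converges since $U(n)$ is compact, it is again positive-definite Hermitian, and translation-invariance of Haar measure gives $\langle h \cdot v, h \cdot w \rangle = \langle v, w \rangle$ for all $h \in U(n)$. Now, given a $GL_n$-subrepresentation $W$, its orthogonal complement $W^\perp$ with respect to $\langle \cdot, \cdot \rangle$ satisfies $V = W \oplus W^\perp$ and is manifestly stable under $U(n)$.

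The step I expect to be the main obstacle is upgrading ``$W^\perp$ is $U(n)$-stable'' to ``$W^\perp$ is $GL_n$-stable''. One clean route is Zariski density: the set $\{ g \in GL_n : g \cdot W^\perp \subseteq W^\perp \}$ is Zariski-closed, being cut out by the vanishing of finitely many matrix coefficients, and it contains $U(n)$; since $U(n)$ is Zariski-dense in $GL_n$ (for $n = 1$ because a nonzero polynomial has finitely many roots, and in general because $\mathfrak{gl}_n = \mathfrak{u}(n) \oplus i\,\mathfrak{u}(n)$, so no proper Zariski-closed subgroup can contain this real form), the set is all of $GL_n$. Alternatively one argues infinitesimally: $U(n)$-stability of the complex subspace $W^\perp$ forces stability under $\mathfrak{u}(n)$, hence under $\mathfrak{gl}_n = \mathfrak{u}(n) \otimes_{\mathbb R} \mathbb C$, hence under $GL_n$ since $GL_n$ is connected and generated by $\exp(\mathfrak{gl}_n)$. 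Either way, combining this with the previous two paragraphs completes the proof.

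For completeness I would note the alternative purely algebraic arguments — linear reductivity of $GL_n$ via a Reynolds operator, or Weyl's theorem on complete reducibility for the semisimple Lie algebra $\mathfrak{sl}_n$ combined with Proposition \ref{J:thCxrep} for the central $\Cx$ — but at the level of these notes the unitary trick gives the shortest self-contained route.
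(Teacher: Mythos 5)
Your proof is correct and follows exactly the route the paper indicates (it offers no detailed proof, only the remark that one approach uses the Haar measure on the maximal compact subgroup $U(n)$): averaging a Hermitian form over $U(n)$, splitting off orthogonal complements, and upgrading $U(n)$-stability to $GL_n$-stability by Zariski density or the Lie algebra argument. Nothing further is needed.
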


There are a number of proofs of this theorem.  One approach is to use the existence of a Haar measure on the maximal compact subgroup $ U(n) $.

\begin{Example} \label{J:ex9}
Consider the representation of $ GL_n $ on $\C^n \otimes \C^n $.  We have a direct sum decomposition into subrepresentations $ \C^n \otimes \C^n = \Lambda^2 \C^n \oplus \Sym^2 \C^n$.  Later we will see that $\Lambda^2 \C^n $ and $ \Sym^2 \C^n $ are irreducible representations.
\end{Example}

Now we would like to describe the irreducible representations of $ GL_n$.  For this it will be convenient to introduce a partial order on $\Z^n $.   We say $ \lambda \ge \mu $ iff we can write
\begin{equation*}
\lambda - \mu = k_1 \alpha_1 + \cdots + k_{n-1} \alpha_{n-1}
\end{equation*}
for some non-negative integers $k_1, \dots, k_{n-1} $ where $ \alpha_1 = (1, -1,0, \dots, 0), \dots, \alpha_{n-1} = (0, \dots, 0, 1, -1) $.  In this situation we write $ \het(\lambda - \mu) = k_1 + \cdots + k_{n-1} $ and call this number the \textbf{height} of $ \lambda - \mu $.

It will also be convenient to introduce the subset $ P^+ = \Z^n_+ $ of \textbf{dominant weights} which consists of weakly decreasing sequences, $ \mu_1 \ge \dots \ge \mu_n$, with all $ \mu_i \in \Z$.

A representation $ V$  is said to have \textbf{highest weight} $ \lambda $ if $ V_\lambda \ne 0 $ and whenever $ V_\mu \ne 0$, then $ \mu \le \lambda$.

\begin{Example} \label{J:ex10}
Consider $ \Sym^3 \C^2 $.  As calculated in example \ref{J:ex6}, the weights are $(3,0), (2,1), (1,2), (0,3) $.  Hence this representation has highest weight $(3,0)$.
\end{Example}

Let $ N $ denote the subgroup of $ GL_n $ consisting of those upper triangular matrices with ones on the diagonals.   Let $ V $ be a representation of $ GL_n$.  A vector $ v \in V $ is  called a \textbf{highest weight vector} if it lies in $ V_\lambda$ for some $ \lambda $ and is $ N$-invariant.

Here is the final main theorem concerning the representation theory of $ GL_n$.
\begin{Theorem} \label{J:th4}
For each $ \lambda \in \Z^n_+$, there exists a representation $ V(\lambda) $ which can be characterized in one of the following two equivalent ways.
\begin{itemize}
\item It is irreducible and of highest weight $ \lambda $.  
\item It has a 1-dimensional space of highest weight vectors and these vectors have weight $ \lambda $. 
\end{itemize}
The set $\{ V(\lambda) \}_{\lambda \in \Z^n_+} $ is the complete collection of the irreducible representations of $ GL_n$.
\end{Theorem}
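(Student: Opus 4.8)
The plan is to prove, in order: every nonzero representation contains a highest weight vector; in an irreducible representation this vector spans a one-dimensional weight space whose weight dominates every other weight; that weight is automatically dominant; and for every dominant weight an irreducible representation with that highest weight exists. First, to produce highest weight vectors, I would choose a weight $\lambda$ that is maximal in the partial order among the (finitely many) weights of $V$ and pick $0 \neq v \in V_\lambda$. The matrix units $E_{ij}$ with $i<j$ span $\mathrm{Lie}(N)$, and $E_{ij}v$ lies in the weight space for $\lambda + \alpha_i + \cdots + \alpha_{j-1} > \lambda$, hence vanishes by maximality; so $v$ is annihilated by $\mathrm{Lie}(N)$ and therefore, $N$ being connected, fixed by $N$. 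Thus $v$ is a highest weight vector. (One could also invoke the Lie--Kolchin fixed point theorem for the solvable group $B = TN$ acting on $\pb(V)$.)

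Next I would pass to the Lie algebra. Since $GL_n$ is connected, a subspace of $V$ is $GL_n$-stable iff it is $\gl_n$-stable, and $\gl_n$ is spanned by the raising operators $E_{ij}$ $(i<j)$, the Cartan elements $E_{ii}$, and the lowering operators $E_{ij}$ $(i>j)$. If $v$ is a highest weight vector of weight $\lambda$ then $E_{ij}v = 0$ for $i<j$ and the $E_{ii}$ act by scalars, so by the Poincar\'e--Birkhoff--Witt theorem the subrepresentation $U(\gl_n)v$ is spanned by products of lowering operators applied to $v$; since each $E_{ij}$ $(i>j)$ decreases the weight by the positive root $\alpha_j + \cdots + \alpha_{i-1}$, every weight $\mu$ of $U(\gl_n)v$ satisfies $\mu \le \lambda$ and its $\lambda$-weight space is exactly $\C v$. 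If $V$ is irreducible it coincides with $U(\gl_n)v$, so $\lambda$ is its unique highest weight, $V_\lambda = \C v$ is one-dimensional, and every highest weight vector of $V$ --- being a generator, hence of maximal weight --- has weight $\lambda$. Dominance of $\lambda$ then follows by acting with permutation matrices $w \in S_n \subset GL_n$: $wV_\lambda = V_{w\lambda} \neq 0$, so $w\lambda \le \lambda$ for all $w$, and taking $w$ a simple transposition forces $\lambda_i \ge \lambda_{i+1}$, i.e. $\lambda \in \Z^n_+$.

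For existence, given $\lambda \in \Z^n_+$ I would first exhibit \emph{some} representation $W$ containing a highest weight vector $v$ of weight $\lambda$, and then set $V(\lambda)$ to be the subrepresentation it generates. For instance, twisting by a power of the determinant representation reduces to the case $\lambda_n \ge 0$, and then inside $W = \Sym^{\lambda_1 - \lambda_2}(\C^n) \otimes \Sym^{\lambda_2 - \lambda_3}(\Lambda^2\C^n) \otimes \cdots \otimes \Sym^{\lambda_{n-1} - \lambda_n}(\Lambda^{n-1}\C^n)$ the vector $v = e_1^{\lambda_1 - \lambda_2} \otimes (e_1 \wedge e_2)^{\lambda_2 - \lambda_3} \otimes \cdots$ is a highest weight vector of weight $\lambda$; alternatively $V(\lambda)$ can be realized by the Borel--Weil theorem developed below. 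That $V(\lambda)$ is irreducible follows from semisimplicity (Theorem \ref{J:th3}): a nonzero subrepresentation $U$ has a complement, and since $V(\lambda)_\lambda = \C v$ is one-dimensional, either $v \in U$ --- so that $U$ contains the subrepresentation generated by $v$, namely all of $V(\lambda)$ --- or $v$ lies in the complement, forcing $U = 0$. Finally, every irreducible $V$ contains a highest weight vector whose weight $\lambda$ is dominant, and $V$ equals the subrepresentation it generates, so $V \cong V(\lambda)$; the $V(\lambda)$ are pairwise nonisomorphic since they have distinct highest weights. Hence $\{V(\lambda)\}_{\lambda \in \Z^n_+}$ is exactly the set of irreducibles.

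It remains to identify the two characterizations. The analysis above shows that an irreducible of highest weight $\lambda$ has a one-dimensional space of highest weight vectors, all of weight $\lambda$. Conversely, if $V$ has a one-dimensional space of highest weight vectors of weight $\lambda$, then decomposing $V \cong \bigoplus_i V(\mu_i)^{\oplus m_i}$ by Theorem \ref{J:th3} and observing that each $V(\mu_i)^{\oplus m_i}$ contributes an $m_i$-dimensional space of highest weight vectors of weight $\mu_i$ forces a single summand, with $\mu_i = \lambda$ and $m_i = 1$, so $V \cong V(\lambda)$. (Uniqueness up to isomorphism can also be seen directly: given highest weight vectors $v, v'$ of weight $\lambda$ in irreducibles $V, V'$, the vector $(v,v') \in V \oplus V'$ generates an irreducible submodule surjecting onto each factor, hence isomorphic to both.) I expect the existence step to be the main obstacle, since one must actually construct a representation realizing each dominant weight; the remaining steps are formal consequences of Theorems \ref{J:th2} and \ref{J:th3} together with the routine --- but technically central --- weight bookkeeping via Poincar\'e--Birkhoff--Witt and the lowering operators.
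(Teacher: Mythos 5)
The paper never proves this theorem: it is stated as standard background, with the arguments deferred to the textbook references listed at the start of Section \ref{J:se2}, so there is no internal proof to compare against. Your proposal is the standard highest-weight classification and is correct in outline: a maximal weight yields a vector killed by $\mathrm{Lie}(N)$ and hence $N$-fixed, the Poincar\'e--Birkhoff--Witt bookkeeping with lowering operators shows the generated submodule has all weights $\le \lambda$ and a one-dimensional $\lambda$-weight space, permutation matrices (as in Theorem \ref{J:th2}) give dominance, and semisimplicity (Theorem \ref{J:th3}) gives both irreducibility of the submodule generated by a highest weight vector and the equivalence of the two characterizations, with existence supplied by an explicit construction. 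One small slip in the existence step: after reducing only to $\lambda_n \ge 0$, the vector $v$ in $W = \Sym^{\lambda_1-\lambda_2}(\C^n) \otimes \cdots \otimes \Sym^{\lambda_{n-1}-\lambda_n}(\Lambda^{n-1}\C^n)$ has weight $\lambda - \lambda_n(1,\dots,1)$ rather than $\lambda$; either reduce to $\lambda_n = 0$ or include an extra factor $(\Lambda^n\C^n)^{\otimes \lambda_n}$. This is trivially repaired and not a real gap. Your alternative suggestion of getting existence from Borel--Weil is less suited to the paper's internal logic: the paper's proof of Theorem \ref{J:th8} invokes the second characterization in the present theorem and itself defers the existence of a nonzero $N$-invariant section to a reference, so your explicit tensor construction is the more self-contained route and is the right choice for the main line of the argument.
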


These $ V(\lambda) $ are the representations that we will describe geometrically. 

An interesting problem is to determine the character $ \chi_\lambda $ of $ V_\lambda$.  Using our geometric constructions, we will see that  $ \chi_\lambda(\mu) $ can be expressed as the number of components of a certain variety (see sections \ref{J:se18} and {J:se22}).  

%Later, Alistair Savage will explain how to use this to give more combinatorial expressions for the character.

\begin{Example} \label{J:ex11}
Take $ \lambda = (k, 0, \dots , 0) $.  Then $ V(\lambda) = \Sym^k \C^n $.  To see this note that $\Sym^k \C^n $ has a highest weight vector $e_1^k $.  This vector is of weight $ (k, 0, \dots, 0) $ and this is its only highest weight vector (up to scalar).
\end{Example} 

\begin{Exercise} \label{J:ex12}
Show that $ \Lambda^k \C^n $ has a 1-dimensional space of highest weight vectors.  Show that this vector has weight $ (1, \dots, 1, 0 \dots, 0) $.  We will write $ \omega_k $ for this weight.  Conclude that $ \Lambda^k \C^n = V({\omega_k})$.
\end{Exercise}

\begin{Example} \label{J:ex13}
Consider $ \mathfrak{sl}_3 $ the vector space of $3 \times 3$ trace 0 matrices.  We have an action of $ GL_3 $ on this space by conjugation.  This is the irreducible representation $ V(1,0,-1)$.  In particular
\begin{equation*}
\begin{bmatrix}
0 & 0 & 1 \\
0 & 0 & 0 \\
0 & 0 & 0 
\end{bmatrix}
\end{equation*}
is a highest weight vector of weight $ (1,0,-1)$.

The weights of this representation are $$ (1,0,-1), (0,1,-1), (1,-1,0), (0,0,0), (0,-1, 1), (-1, 1, 0 ), (-1, 0, 1) .$$  All of these weight spaces are 1-dimensional with the exception of the $(0,0,0) $ weight space which consists of the diagonal matrices and hence is 2-dimensional.
\end{Example}

\subsection{Determinant representation and polynomial representations} \label{J:se7}
The determinant representation described above is the irreducible representation $ V(1, \dots, 1)$.  We may also consider its dual representation $V(-1, \dots, -1)$ which is given by $ g \mapsto \det(g)^{-1} $. Tensoring with these representations moves us around the set of dominant weights in the sense that 
\begin{align*} 
V(1, \dots, 1) \otimes V(\lambda) &\cong V(\lambda_1+1, \dots, \lambda_n+1) \\ \quad V(-1, \dots, -1) \otimes V(\lambda) &\cong V(\lambda_1 - 1, \dots, \lambda_n -1).
\end{align*}

Let $ \N^n_+ = \{ \lambda \in \Z^n_+ : \lambda_i \ge 0 \text{ for all } i \} $.  We say that an irreducible representation $ V(\lambda) $ is \textbf{polynomial}, if $ \lambda \in \N^n_+ $.  More generally, a representation of $ GL_n $ is called polynomial if it is the direct sum of polynomial irreducible representations.   

Starting with polynomial irreducible representations, we can get to all irreducible representations by repeatedly tensoring with $ V(-1, \dots, -1)$.  For this reason it is enough to consider just these polynomial irreducible representations which we will do in later sections.

\subsection{The Lie algebra action} \label{J:se8}
For some of our geometric constructions, it will be useful to think about the Lie algebra $ \gl_n$ of $ GL_n$. By definition $ \gl_n $ is the Lie algebra of $ n \times n $ matrices. 
 
Let $ \tf $ denote the Lie subalgebra of diagonal matrices in $ \gl_n $.  It is an abelian Lie algebra, usually called the Cartan subalgebra.  Define the Chevalley generators $ E_i, F_i $ for $ i = 1, \dots, n-1$ by setting $ E_i $ to be the matrix with a $1$ in the $ (i,i+1) $ entry and $ 0 $ elsewhere and setting $ F_i $ to be the transpose of $ E_i $.
The Lie algebra $ \gl_n $ is generated by elements $ E_i, F_i$ and the subalgebra $ \tf $.  

Let $ V $ be a representation of $ GL_n$.  Then there exists an action of $ \gl_n $ on $V$ by differentiation.  This means that there is a Lie algebra morphism $ \gl_n \rightarrow \End(V)$.

Suppose that $ \mu $ is a weight of $ V $.  Then $ \mu $ gives us a Lie algebra morphism $ \tf \rightarrow \C $ which takes $ X=\left[\begin{smallmatrix} a_1 &  & \\ &  \ddots & \\  &  & a_n \end{smallmatrix} \right] $ to $ \mu(X) = \mu_1 a_1 + \dots + \mu_n a_n$. 

The relationship between the action of $ GL_n $ and its Lie algebra are summarized by the following result.

\begin{Proposition} \label{J:th5}
\begin{itemize}
\item If $ v $ is a vector in $ V $ of weight $ \mu$, then for $ X \in \tf $ we have that $ X \cdot v = \mu(X)v$.
\item If $ v \in V_\mu $ then $ E_i \cdot v \in V_{\mu + \alpha_i} $ and $ F_i \cdot v \in V_{\mu - \alpha_i} $.
\item If $ v \in V_\mu $ then $ E_i F_i v - F_i E_i v = \langle \mu, \alpha_i \rangle v $ where $ \langle , \rangle $ denotes the usual bilinear form (dot product) on $ \Z^n $.
\item A non-zero vector $ v \in V_\mu $ is $N $-invariant (i.e. is a highest weight vector) iff $ E_i \cdot v = 0 $ for all $ i $.
\end{itemize}
\end{Proposition}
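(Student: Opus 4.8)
The plan is to deduce everything from the differential $ d\rho : \gl_n \to \End(V) $ of the representation $ \rho : GL_n \to GL(V) $, using throughout the standard compatibility $ \rho(\exp X) = \exp(d\rho(X)) $ for $ X \in \gl_n $ (which holds because $ t \mapsto \rho(\exp tX) $ is a one-parameter subgroup of $ GL(V) $ with derivative $ d\rho(X) $ at the identity).

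First I would prove (i). For $ X = \mathrm{diag}(a_1, \dots, a_n) \in \tf $ the element $ \exp(tX) = \mathrm{diag}(e^{ta_1}, \dots, e^{ta_n}) $ lies in the torus $ T $, so if $ v \in V_\mu $ then, by definition of the weight space, $ \exp(tX)\cdot v = e^{t(\mu_1 a_1 + \cdots + \mu_n a_n)} v = e^{t\mu(X)} v $; differentiating at $ t = 0 $ gives $ X \cdot v = \mu(X) v $. I would record here the consequence that, since distinct integer weights induce distinct linear functionals on $ \tf $ and $ V $ is the direct sum of its weight spaces (Proposition~\ref{J:th1}), each $ V_\nu $ is exactly the simultaneous $ \tf $-eigenspace $ \{ w \in V : X \cdot w = \nu(X) w \ \text{for all}\ X \in \tf \} $. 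For (ii) and (iii) I would next compute the needed brackets inside $ \gl_n $ directly from the matrix-unit descriptions: $ [X, E_i] = \alpha_i(X) E_i $ and $ [X, F_i] = -\alpha_i(X) F_i $ for $ X \in \tf $, and $ [E_i, F_i] = H_i $ where $ H_i = \mathrm{diag}(0,\dots,0,1,-1,0,\dots,0) $ with the $ 1 $ in position $ i $. Then (iii) is immediate from (i): for $ v \in V_\mu $, $ E_i F_i v - F_i E_i v = H_i \cdot v = \mu(H_i) v = (\mu_i - \mu_{i+1}) v = \langle \mu, \alpha_i\rangle v $. For (ii), if $ v \in V_\mu $ then for every $ X \in \tf $,
\[
X \cdot (E_i \cdot v) = [X, E_i]\cdot v + E_i \cdot (X \cdot v) = \bigl(\alpha_i(X) + \mu(X)\bigr) E_i \cdot v = (\mu + \alpha_i)(X)\, E_i \cdot v ,
\]
so by the eigenspace characterization above $ E_i \cdot v \in V_{\mu+\alpha_i} $; the computation for $ F_i $ is identical.

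It remains to prove (iv), which I expect to be the one point needing a little care. One direction is short: if $ v $ is $ N $-invariant then $ \exp(tE_i) = I + tE_i \in N $ fixes $ v $ for all $ t \in \C $, and differentiating at $ t = 0 $ gives $ E_i \cdot v = 0 $. For the converse I would use the elementary fact that $ N $, the group of upper-triangular unipotent matrices, is generated by the one-parameter subgroups $ \{ I + tE_i : t \in \C \} $ for $ i = 1, \dots, n-1 $. If $ E_i \cdot v = 0 $ then $ \exp(tE_i)\cdot v = \exp(t\, d\rho(E_i))v = v $, since only the degree-zero term of the exponential series survives; hence every generator of $ N $ fixes $ v $, so $ v $ is $ N $-invariant. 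The only non-formal ingredients in the whole argument are this generation statement for $ N $ and the matrix-unit bracket computations, both of which are routine for $ GL_n $; everything else is bookkeeping with the exponential map.
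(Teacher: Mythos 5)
Your argument is correct, and there is nothing in the paper to compare it against: Proposition \ref{J:th5} is stated there without proof, as part of the standard background (the section points to the textbook references), so your write-up simply supplies the standard argument in full. The structure is the right one: (i) by differentiating the torus action along $\exp(tX)$ for $X \in \tf$; (ii) and (iii) from the matrix-unit brackets $[X,E_i]=\alpha_i(X)E_i$, $[X,F_i]=-\alpha_i(X)F_i$, $[E_i,F_i]=H_i$ together with your eigenspace characterization of the weight spaces (which does need the weight decomposition of Proposition \ref{J:th1}, as you note, since a priori $V_{\mu+\alpha_i}$ is defined via the $T$-action rather than the $\tf$-action); and (iv) from the compatibility $\rho(\exp tY)=\exp(t\,d\rho(Y))$ together with generation of $N$ by the simple root subgroups. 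The two ingredients you flag as non-formal are indeed the only ones, and both check out: in the converse direction of (iv) the exponential series collapses to $v$ as soon as $E_i\cdot v=0$, with no nilpotency needed; and the generation of $N$ by the subgroups $\{I+tE_{i,i+1}\}$ follows for $GL_n$ from the commutator identity
\begin{equation*}
(I+tE_{i,i+1})(I+sE_{i+1,i+2})(I+tE_{i,i+1})^{-1}(I+sE_{i+1,i+2})^{-1}=I+ts\,E_{i,i+2}
\end{equation*}
and induction on the distance from the diagonal (or directly by row reduction). So the proof is complete as written.
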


Using this Proposition, we get a ``picture'' of the representation $ V_\lambda $.  We have a weight space decomposition, we move around the weight spaces using $ E_i, F_i $ and we can start with our highest weight vector and generate the whole representation by moving down with the $ F_i$.  This picture can be formalized through the notion of a crystal (see for example \cite{HK}).

%This picture can be formalized using the notion of a crystal as in Kang's lectures.

\section{Borel-Weil theory} \label{J:se9}
The Borel-Weil construction is the most ``classical'' of the geometric constructions we will consider and hence it can be found in many sources, for example, \cite{Gbook}, section 6.1.13 or \cite{J}, section II.5.

The geometry that will be used in this section concerns line bundles on projective varieties.  We begin by recalling some general facts.

\subsection{Line bundles on projective varieties} \label{J:se10}
Let $ \pb^n $, \textbf{projective space}, denote the set of one dimensional subspaces of $ \C^{n+1} $.  It is naturally an algebraic variety.

A \textbf{projective variety} $ X$ is a subset of projective space defined by the vanishing of some homogeneous polynomial equations.  A \textbf{line bundle} $L$ on a projective variety $X $ is an algebraic variety $\pi : L \rightarrow X $ over $ X$ such that all fibres carry the structure of a one dimensional complex vector space.  Moreover we require that $ \pi $ be locally trivial in the Zariski topology, compatible with the vector space structure on each fibre.  Usually we will construct a line bundle by describing the fibre of $ \pi $ over each point in $ X $.  If this is done in a natural way, then these fibres will glue together uniquely to form a variety $ L $.

Given the line bundle $ L $, we may consider the set $\Gamma(X, L) $ of all sections, ie $ s : X \rightarrow L $ such that $ \pi \circ s = \mathrm{id} $.  The set $ \Gamma(X, L) $ forms a vector space using the vector space structure on each fibre.

\begin{Example} \label{J:ex14}
A simple example of a line bundle is the trivial line bundle $ L= X \times \C$.  In this case, $ \Gamma(X,L) $ will be the vector space of regular functions on $ X $ (usually denoted $ \mathcal{O}(X) $).  A well-known result in algebraic geometry is that the only regular functions on a connected projective variety are constant.
\end{Example}

\begin{Example} \label{J:ex15}
Take $ Y = \pb^1 $ and consider the line bundle $ L_2 $ whose fibre at a point $ W \in \pb^1 $ is $ \C^2/W $ (one should keep in mind that a point $ W \in \pb^1 $ is a 1-dimensional subspace of $ \C^2 $).

There is an isomorphism of vector spaces (and later $ GL_2 $ representations) 
\begin{equation*}
\begin{aligned}
\C^2 &\rightarrow \Gamma(\pb^1, L_2) \\
v &\mapsto [v]_W
\end{aligned}
\end{equation*}
where for $ v \in \C^2$, we let $ [v]_W $ denote its image in $ \C^2/W $.
\end{Example}

We have the following fundamental result.
\begin{Theorem} \label{J:th6}
Let $ X $ be a projective variety and $ L $ be a line bundle.  Then $ \Gamma(X,L) $ is a finite dimensional vector space.
\end{Theorem}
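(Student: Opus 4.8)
The plan is to establish finite-dimensionality of $\Gamma(X,L)$ for a projective variety $X$ with a line bundle $L$ by reducing to the known finite-dimensionality of global sections of line bundles (equivalently, of graded pieces of homogeneous coordinate rings) on projective space itself. The cleanest elementary route is: first treat $X = \pb^N$ with $L = \mathcal{O}(d)$ directly, then bootstrap to an arbitrary projective variety $X$ by embedding it and pushing forward.

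First I would handle the model case. For $X = \pb^N$, a line bundle is (up to isomorphism) one of the twisting sheaves $\mathcal{O}(d)$, and for $d \ge 0$ its global sections are identified with homogeneous polynomials of degree $d$ in $N+1$ variables — a space of dimension $\binom{N+d}{N}$, manifestly finite — while for $d < 0$ there are no nonzero global sections. This identification can be made concrete: a section of $\mathcal{O}(d)$ assigns to each line $\ell \subset \C^{N+1}$ an element of $\ell^{\otimes(-d)}$ (or $(\C^{N+1}/\ell)^{?}$-type data), and regularity forces this assignment to be given by a single homogeneous polynomial. This is the one place where one genuinely uses that regular functions on affine charts are polynomials and that they glue; it parallels the computation in Example~\ref{J:ex15}.

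Next I would pass to general $X$. Choose a projective embedding $X \hookrightarrow \pb^N$ (this is the definition of projective variety given in the text). A line bundle $L$ on $X$ admits — after possibly replacing it in a way that doesn't decrease sections, or by a standard argument — an extension/comparison with the restriction of some $\mathcal{O}(d)$; more robustly, one uses that $L$ itself, being locally trivial in the Zariski topology on the quasi-compact $X$, can be analyzed on a finite affine cover $X = \bigcup U_i$ where $L|_{U_i}$ is trivial. A global section $s$ is then a tuple of regular functions $s_i \in \mathcal{O}(U_i)$ agreeing up to the (invertible regular) transition functions on overlaps; since $\mathcal{O}(U_i)$ is a finitely generated $\C$-algebra and the $U_i$ are finite in number, a dimension count — or the coherence/Noetherian machinery — bounds the space of such compatible tuples. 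The slick version is to invoke coherence of $\pi_* L$ on a point and the fact that the global sections of a coherent sheaf on a projective variety form a finite-dimensional vector space; but since the notes are avoiding heavy sheaf theory, I would instead present the finite-affine-cover argument, showing $\Gamma(X,L)$ injects into a finite product of finite-dimensional spaces of bounded-degree data.

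\textbf{The main obstacle} will be making the general case rigorous without the language of coherent sheaves and Serre's finiteness theorem. The honest difficulty is controlling, on each affine chart $U_i \subset X$, which regular functions can appear as $s_i$: a priori $\mathcal{O}(U_i)$ is infinite-dimensional, so finite-dimensionality of $\Gamma(X,L)$ must come entirely from the gluing constraints across overlaps, and phrasing this cleanly requires either a degree bound (via the embedding in $\pb^N$ and comparison of $L$ with $\mathcal{O}(d)|_X$) or an appeal to Noetherian-ness of the homogeneous coordinate ring. I expect the write-up to either (a) cite Serre/Hartshorne for coherent sheaves and get the result in one line, or (b) restrict attention to the line bundles $L$ that actually occur in Borel–Weil — those pulled back from $\mathcal{O}(1)$ under Plücker-type embeddings of flag varieties — where the sections are literally spanned by explicit polynomials and finiteness is visible. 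Given the expository tone and the remark that perverse sheaves are postponed but not all of algebraic geometry, I anticipate the proof simply quotes the standard result, perhaps with the $\pb^N$ case spelled out as motivation.
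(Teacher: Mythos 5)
The paper itself gives no proof of this theorem: it is quoted as a standard fundamental result (finite-dimensionality of the space of global sections of a line bundle, or more generally of a coherent sheaf, on a projective variety --- Serre's finiteness theorem), exactly as you anticipated in your closing paragraph. So your fallback position --- compute the model case $\Gamma(\pb^N,\mathcal{O}(d))$ explicitly and otherwise cite the coherence machinery --- matches the level of treatment in the notes, and that part of your proposal is fine.

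However, the ``elementary'' argument you say you would actually present has a genuine gap. The finite-affine-cover argument --- write $X=\bigcup U_i$ with $L|_{U_i}$ trivial, view a section as a tuple $s_i\in\mathcal{O}(U_i)$ compatible via transition functions, and then ``dimension count'' --- never uses completeness of $X$, only quasi-compactness, and every quasi-projective variety admits such a finite trivializing cover. But the theorem is false without completeness: for $X=\C$ (one chart, trivial bundle) the space of sections is $\C[t]$, infinite-dimensional. So the gluing constraints over a finite cover cannot by themselves bound the dimension; the finiteness has to enter either through properness plus coherence (i.e.\ exactly the theorem you were hoping to avoid) or through an explicit degree bound coming from the embedding $X\hookrightarrow\pb^N$. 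Your suggested comparison of $L$ with a restriction $\mathcal{O}(d)|_X$ ``in a way that doesn't decrease sections'' is also not available as stated: a line bundle on $X$ need not be the restriction of any $\mathcal{O}(d)$, and the correct maneuver (twist so that $L\otimes\mathcal{O}(d)|_X$ is generated by global sections, then control graded pieces of a finitely generated graded module over the homogeneous coordinate ring) is precisely the content of the standard proof, not a shortcut around it. Concretely: either quote Serre's theorem, or, if you want something self-contained in the spirit of these notes, restrict to the bundles $\lc(\lambda)$ on flag varieties, where sections pull back from explicit spaces of polynomials under Pl\"ucker-type embeddings and finiteness is visible by hand.
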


Now suppose that $ X $ is a projective variety with an action of an algebraic group $ G $.  An \textbf{equivariant line bundle} $ L $ is a line bundle together an action of $ G $ on $ L $ such that $ \pi $ is equivariant for these actions.  

In this case $ \Gamma(X,L) $ will be a representation of $ G $.  This action is given as follows.  Let $s \in \Gamma(X,L) $ and $ g \in G $.  Then $ (g\cdot s)(x) = g\cdot s(g^{-1} \cdot x)$.

We will construct the irreducible representations of $GL_n $ in this manner for certain line bundles on a projective variety called the flag variety.

\subsection{The case of $ \pb^1$ } \label{J:se11}
We will begin with $ X = \pb^1$.  To understand the structure of $ \pb^1 $, note that almost every line in $ \C^2 $ is the span of $ (x, 1) $ for some $ x \in \C $.  The only line not of this form is the span of $ (1, 0) $.  This allows us to think of $ \pb^1 $ as the union of $ \C $ with a ``point at infinity''.

Note there is an action of $ GL_2 $ on $ \pb^1 $ coming from the action of $ GL_2 $ on $\C^2$.

On $ \pb^1 $ we have two natural line bundles, $ L_1 $ and $ L_2$.  Recall that a point in $ \pb^1 $ is a one dimensional subspace $ W $ of $ \C^2$.  Then the fibre of $ L_1 $ at $ W $ is $ W $ and the fibre of $ L_2 $ at $ W $ is $ \C^2/W$ (as described above).  

In other words, the total space of the line bundle $ L_1 $ is the variety of pairs 
$$\{(W, w) : w \in \C^2, W \subset \C^2, \dim W = 1, \text{ and } w \in W\}. $$ 
From this description it is clear that $ L_1 $ carries an action of $ GL_2 $ compatible with its action on $ \pb^1$.  A similar analysis holds for $ L_2$.

From these line bundles $ L_1, L_2 $ we can construct more by tensor product.  In particular, for $ (a,b) \in \N^2 $ we define the line bundle $ \lc(a, b) = L_1^{\otimes b} \otimes L_2^{\otimes a} $.  

The following theorem is the first case of our construction.

\begin{Theorem} \label{J:thBWGL2}
Assume $ a \ge b $.
We have an isomorphism of $GL_2 $ representations $ \Gamma(\pb^1,\lc(a,b)) \cong Sym^{a-b} \C^2 \otimes V(1,1)^b$.
\end{Theorem}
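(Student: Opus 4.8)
The plan is to reduce everything to two basic computations and then propagate via tensor products. First I would handle the two extreme generators. The line bundle $L_2$ has fibre $\C^2/W$ at $W$, and Example \ref{J:ex15} already gives an explicit $GL_2$-equivariant isomorphism $\C^2 \xrightarrow{\sim} \Gamma(\pb^1,L_2)$, $v \mapsto [v]_W$; since $\C^2 = \Sym^1\C^2 = V(1,0)$, this is the case $(a,b)=(1,0)$. Next I would treat $L_1 \otimes L_2$, whose fibre at $W$ is $W \otimes (\C^2/W)$, a one-dimensional space; the natural $GL_2$-equivariant trivialization comes from the observation that for a $2$-dimensional space $U$ and a line $W\subset U$ one has a canonical isomorphism $W \otimes (U/W) \cong \Lambda^2 U$ (send $w \otimes [u]$ to $w\wedge u$). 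Hence $\mathcal{L}(1,1)=L_1\otimes L_2$ is $GL_2$-equivariantly the trivial bundle twisted by the one-dimensional representation $\Lambda^2\C^2 = V(1,1)$, so $\Gamma(\pb^1,\mathcal{L}(1,1)) \cong V(1,1)$, the case $(a,b)=(1,1)$.

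Second, I would prove the general statement by writing $\mathcal{L}(a,b) = \mathcal{L}(a-b,0)\otimes \mathcal{L}(1,1)^{\otimes b} = L_2^{\otimes(a-b)} \otimes (L_1\otimes L_2)^{\otimes b}$. By the previous paragraph, $(L_1\otimes L_2)^{\otimes b}$ is $GL_2$-equivariantly trivial up to the character $V(1,1)^{\otimes b}$, so $\Gamma(\pb^1, \mathcal{L}(a,b)) \cong \Gamma(\pb^1, L_2^{\otimes(a-b)}) \otimes V(1,1)^{\otimes b}$ as $GL_2$-representations. It therefore remains to identify $\Gamma(\pb^1, L_2^{\otimes k}) \cong \Sym^k\C^2$ for $k = a-b \ge 0$. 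There is an obvious $GL_2$-equivariant linear map $\Sym^k\C^2 \to \Gamma(\pb^1, L_2^{\otimes k})$ sending $v_1\cdots v_k$ to the section $W \mapsto [v_1]_W \cdots [v_k]_W \in (\C^2/W)^{\otimes k}$, extended linearly; I would check this map is well-defined (symmetric, since the target fibre is a symmetric power — indeed $(\C^2/W)^{\otimes k} = \Sym^k(\C^2/W)$ as the quotient is one-dimensional) and then show it is an isomorphism.

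For that last identification I would work in the affine chart $W = \spn(x,1)$, where $\C^2/W$ is spanned by the image of $e_1$ and the section attached to $e_1^{k-j}e_2^j$ becomes, in the trivialization, the polynomial $(1 - x)^{?}$—more precisely, writing $e_2 \equiv -x\, e_1 \pmod W$, the monomial $e_1^{k-j}e_2^j$ maps to $(-x)^j [e_1]_W^{\otimes k}$, so the image of $\Sym^k\C^2$ is exactly the span of $1, x, \dots, x^k$ in this chart. Then I would invoke Theorem \ref{J:th6} (finite-dimensionality of $\Gamma$) together with the standard fact that a global section of a line bundle on $\pb^1$ is determined by its restriction to the big affine cell, and a growth/pole-order estimate at the point $\spn(1,0)$ to see that every global section of $L_2^{\otimes k}$ restricts to a polynomial of degree at most $k$: this shows the map is surjective, and injectivity is clear since a section vanishing on a dense open set is zero. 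Comparing dimensions ($\dim \Sym^k\C^2 = k+1$) finishes it.

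The main obstacle I expect is the last step: rigorously pinning down $\Gamma(\pb^1, L_2^{\otimes k})$ and, in particular, showing that no sections of higher "degree" exist. This is the classical fact that $\Gamma(\pb^1, \mathcal{O}(k)) $ has dimension $k+1$, and the cleanest route is to transfer the section to the trivializing chart, check its behaviour at the point at infinity using the transition function between the two standard charts of $\pb^1$ (which for $L_2$ scales like $x \mapsto 1/x$ appropriately to the $k$-th power), and conclude that regularity at infinity forces polynomial degree $\le k$. Everything else — equivariance of all the maps, the canonical isomorphism $W\otimes(U/W)\cong\Lambda^2 U$, and the behaviour under tensor products — is routine once set up, so I would present those briskly and concentrate the detail on the degree bound.
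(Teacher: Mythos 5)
Your proposal is essentially the paper's argument, with one welcome addition. Like the paper, you reduce to the case $b=0$ by observing that $\lc(1,1)=L_1\otimes L_2$ is the trivial bundle twisted by the determinant character (your canonical isomorphism $W\otimes(\C^2/W)\cong\Lambda^2\C^2$ is exactly the reason the paper can assert this), and you then use the natural map $\Sym^{a-b}\C^2\to\Gamma(\pb^1,L_2^{\otimes(a-b)})$, whose injectivity is immediate. Where the paper simply says that surjectivity ``requires a bit more machinery'' and stops, you actually supply it: restrict a section to the big cell $W=\spn(x,1)$, trivialize by $[e_1]_W^{\otimes k}$ so that it becomes a polynomial $f(x)$, and check regularity at $W=\spn(e_1)$ via the transition function. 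Your computation is right: in the coordinate $y=1/x$ one has $e_1\equiv -y\,e_2 \pmod{W}$, so $[e_1]_W^{\otimes k}=(-y)^k[e_2]_W^{\otimes k}$ and regularity at $y=0$ forces $\deg f\le k$; combined with your chart formula $e_1^{k-j}e_2^j\mapsto(-x)^j[e_1]_W^{\otimes k}$, this gives both surjectivity and (again) injectivity, and matches $\dim\Sym^k\C^2=k+1$. So your route is the same reduction as the paper's, but self-contained: it replaces the appeal to unspecified machinery (in effect the computation of $\Gamma(\pb^1,\mathcal{O}(k))$) by the classical two-chart degree bound, at the modest cost of an explicit coordinate calculation.
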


From our description of $ V(a,0)$ (Example \ref{J:ex11}) and our observation about tensoring with the determinant representation (Section \ref{J:se7}), we see that Theorem \ref{J:thBWGL2} implies that $ \Gamma(\pb^1, \lc(a,b)) \cong V(a,b)$.

\begin{proof}
Let us assume that $ b=0$.  There is a map $\Sym^a \C^2 \rightarrow \Sym^a (\C^2/W) = (\C^2/W)^{\otimes a}$ for any $ W $.  This gives us a map $ \Sym^a \C^2 \rightarrow \Gamma(\pb^1, \lc(a,0))$.   This map is clearly injective.  Showing that it is surjective requires a bit more machinery.

The case of $ b \ne 0 $ follows immediately since $ \lc(1,1) $ is isomorphic to the trivial line bundle except with non-trivial $ GL_2 $ action ($ GL_2 $ acts by the determinant representation).
\end{proof}

\subsection{The flag variety} \label{J:se12}
Now, we will generalize this construction for $ n \ge 2 $.  For this we will need to define a variety called the flag variety.

A \textbf{flag} in $ \C^n $ is a sequence of subspaces $ 0=V_0 \subset V_1 \subset \cdots \subset V_n = \C^n $ where $ \dim V_i = i $.  

\begin{Example} \label{J:ex16}
Let $ e_1, \dots, e_n $ be the standard basis for $ \C^n $.

The standard flag is
\begin{equation*}
0 \subset \langle e_1 \rangle \subset \langle e_1, e_2 \rangle \subset \cdots \subset \C^n
\end{equation*}
and the opposite flag is
\begin{equation*}
0 \subset \langle e_n \rangle \subset \langle e_n, e_{n-1} \rangle \subset \cdots \subset \C^n.
\end{equation*}
\end{Example}

The set of all flags in $ \C^n $ forms a projective variety denoted $ \Fl(\C^n) $.  Note that $\Fl(\C^2) $ is the same thing as $ \pb^1 $.

\begin{Exercise} \label{J:ex17}
Find an embedding $ \Fl(\C^n) \hookrightarrow \pb^N $ for some $ N $.  Hint: use the Pl\"ucker embedding.
\end{Exercise}

The group $ GL_n $ acts on the flag variety $ \Fl(\C^n) $ by acting on each piece in the flag.  In fact, this action is transitive.  To see this just note that $ GL_n $ acts transitively (in fact simply transitively) on the set of all ordered bases of $ \C^n $ and there is a surjective map from ordered bases to flags.

Consider now the set of flags $\Fl(\C^n)^T$ which are invariant under the action of the maximal torus $ T $ of $ GL_n$.  

\begin{Proposition} \label{J:th6.5}
There is a bijection between $ \Fl(\C^n)^T $ and the symmetric group $ S_n$. 
\end{Proposition}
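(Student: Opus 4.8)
The plan is to set up an explicit bijection by sending a flag $0 = V_0 \subset V_1 \subset \cdots \subset V_n = \C^n$ that is $T$-stable to a permutation, and then check the map is well-defined and invertible. First I would observe that since $T$ consists of diagonal matrices with distinct eigenvalue characters on the standard basis vectors $e_1, \dots, e_n$, a subspace $W \subset \C^n$ is $T$-invariant if and only if $W$ is a \emph{coordinate subspace}, i.e. $W = \spn\{e_i : i \in S\}$ for some subset $S \subseteq \{1, \dots, n\}$. The key point here is that a generic element $z = \mathrm{diag}(z_1, \dots, z_n)$ of $T$ acts on $\C^n$ with $n$ distinct eigenvalues, so its invariant subspaces are exactly the sums of eigenlines, which are the coordinate subspaces.

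Next I would apply this fibrewise to a $T$-stable flag: each $V_i$ must be a coordinate subspace $\spn\{e_j : j \in S_i\}$ with $|S_i| = i$, and the containment $V_{i-1} \subset V_i$ forces $S_{i-1} \subset S_i$. So we get a chain $\emptyset = S_0 \subset S_1 \subset \cdots \subset S_n = \{1, \dots, n\}$ of subsets with $|S_i| = i$; such a chain is determined by the sequence in which elements are added, namely by the unique element $w(i) \in S_i \setminus S_{i-1}$. This defines a function $w : \{1, \dots, n\} \to \{1, \dots, n\}$, and since the $S_i$ increase by one element each step and exhaust everything, $w$ is a bijection, i.e. $w \in S_n$. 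Conversely, given $w \in S_n$, setting $V_i = \spn\{e_{w(1)}, \dots, e_{w(i)}\}$ produces a $T$-stable flag, and these two constructions are visibly mutually inverse.

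The only genuine content is the lemma that $T$-invariant subspaces are coordinate subspaces, so that is the step I would write out most carefully; everything after it is bookkeeping about chains of subsets. A small subtlety to address is that the invariance condition for the whole torus $T$ (not just one element) only makes the argument easier, since an $\spn\{e_i : i\in S\}$ is automatically invariant under all diagonal matrices; the nontrivial direction uses just one regular element. I expect the main obstacle, such as it is, to be purely expository: stating cleanly why simultaneous eigenspace decomposition under a diagonalizable operator with distinct eigenvalues forces an invariant subspace to be spanned by a subset of the eigenbasis. One clean way is: if $v = \sum_i c_i e_i \in W$ and $z \cdot v = \sum_i z_i c_i e_i \in W$ for all $z$, then by taking enough values of $z$ (a Vandermonde argument) each $c_i e_i \in W$, so $W$ is spanned by the $e_i$ it involves.
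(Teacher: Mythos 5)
Your proof is correct and follows essentially the same route as the paper: identify $T$-invariant subspaces with coordinate subspaces, then read off the permutation from the order in which basis vectors are added along the flag, with the explicit inverse construction $V_i = \spn\{e_{\sigma(1)},\dots,e_{\sigma(i)}\}$. The only difference is that you spell out the eigenspace/Vandermonde justification for the coordinate-subspace lemma, which the paper simply asserts.
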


\begin{proof}
If $ V_\bullet \in \Fl(\C^n)^T $, then each $ V_i $ is invariant under the action of $ T $.  This means that each $ V_i $ is a coordinate subspace of $ \C^n$.  From this it is immediate that $ V_\bullet $ can be constructed by taking the standard basis $ e_1, \dots, e_n $ for $ \C^n $, putting it in some order and then constructing a flag by adding one basis vector at a time.  This ordering gives us an element of $ S_n$.

Conversely if $ \sigma \in S_n$, then the corresponding $ T$-fixed flag is 
\begin{equation*}
0 \subset \langle e_{\sigma(1)} \rangle \subset \langle e_{\sigma(1)}, e_{\sigma(2)} \rangle \subset \cdots \subset \C^n.
\end{equation*}
\end{proof}

The standard and opposite flags are both $ T$-fixed and correspond to the identity and longest permutations, respectively.

Now we will consider the action of the group $ N $ of upper triangular matrices with 1s on the diagonal.  

\begin{Proposition} \label{J:th7}
The $ N $ orbit through the opposite flag is dense in $ \Fl(\C^n)$
\end{Proposition}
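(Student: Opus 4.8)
The plan is to show that the $N$-orbit of the opposite flag is a locally closed subvariety of $\Fl(\C^n)$ of the same dimension as the flag variety itself; since $\Fl(\C^n)$ is irreducible, any such subvariety is automatically dense. First I would compute $\dim \Fl(\C^n)$. Using transitivity of the $GL_n$-action (established just above) together with the fact that the stabilizer of the standard flag is the Borel subgroup $B$ of upper-triangular matrices, we get $\dim \Fl(\C^n) = \dim GL_n - \dim B = n^2 - \binom{n}{2} - n = \binom{n}{2}$.

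Next I would analyze the $N$-orbit through the opposite flag $V_\bullet^{\mathrm{opp}}$, where $V_i^{\mathrm{opp}} = \langle e_n, e_{n-1}, \dots, e_{n-i+1}\rangle$. The key step is to identify the stabilizer $\mathrm{Stab}_N(V_\bullet^{\mathrm{opp}})$. A unipotent upper-triangular matrix $u$ fixes each $V_i^{\mathrm{opp}}$ iff it preserves each ``bottom'' coordinate subspace; writing this out entry by entry, one finds that $u$ must be the identity — an upper-triangular unipotent matrix preserving $\langle e_n, \dots, e_{n-i+1}\rangle$ for all $i$ is forced to have all off-diagonal entries zero. (Concretely, $u e_{n-i+1} \in V_i^{\mathrm{opp}}$ combined with upper-triangularity pins down the entries inductively.) Hence $\mathrm{Stab}_N(V_\bullet^{\mathrm{opp}})$ is trivial, so the orbit map $N \to \Fl(\C^n)$, $u \mapsto u \cdot V_\bullet^{\mathrm{opp}}$, is injective and the orbit has dimension $\dim N = \binom{n}{2} = \dim \Fl(\C^n)$.

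Finally I would invoke two standard facts: the orbit of an algebraic group acting on a variety is a smooth locally closed subvariety, and a locally closed subvariety of an irreducible variety whose dimension equals that of the ambient variety must be dense (its closure is a closed irreducible subvariety of full dimension, hence all of $\Fl(\C^n)$). Irreducibility of $\Fl(\C^n)$ follows since it is a homogeneous space for the connected group $GL_n$. This completes the argument.

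I expect the main obstacle to be the stabilizer computation: one must be careful to show that no nontrivial unipotent upper-triangular matrix fixes the opposite flag, which is exactly where the ``oppositeness'' is essential — the same computation for the standard flag would give the full group $N$ as stabilizer. An alternative route avoiding dimension counts entirely would be to exhibit the $N$-orbit explicitly as the ``big cell'' (the set of flags $V_\bullet$ with $V_i$ in general position with respect to the standard flag, i.e. $V_i \cap \langle e_1, \dots, e_{n-i}\rangle = 0$) and check directly that its complement is cut out by the vanishing of certain Plücker coordinates, hence is a proper closed subset; but the orbit-dimension argument is cleaner to state.
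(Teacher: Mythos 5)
Your argument is correct and complete: the stabilizer computation is right (an upper-triangular unipotent $u$ preserving $\langle e_n,\dots,e_{n-i+1}\rangle$ for every $i$ is forced, column by column from the last, to be the identity), the dimension count $\dim \Fl(\C^n) = n^2 - \bigl(n + \tbinom{n}{2}\bigr) = \tbinom{n}{2} = \dim N$ is right, and the final step (a locally closed orbit of the connected group $N$ has irreducible closure of full dimension inside the irreducible variety $\Fl(\C^n)$, hence is dense) is standard and valid. Note that the paper does not actually prove this proposition — it is left to the reader as Exercise \ref{J:ex19} — so there is no proof to compare against; your orbit--stabilizer dimension argument is one of the two standard routes, and the explicit ``big cell'' description you sketch at the end (flags transverse to the standard flag) is the other, which is essentially what Exercise \ref{J:ex18} illustrates for $\pb^1$, where the orbit is the affine chart $\C \subset \pb^1$.
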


\begin{Exercise} \label{J:ex18}
Show that in the case of $ \pb^1 $, the $N$-orbit through the opposite flag is precisely the open subset $ \C $ in $\pb^1 $.
\end{Exercise}

\begin{Exercise} \label{J:ex19}
Prove this Proposition.
\end{Exercise}

\subsection{Line bundles on the flag variety} \label{J:se13}
For each $ \lambda \in \N^n_+$, we now define a line bundle $ \lc(\lambda) $ on the flag variety.  The fibre of $ \lc(\lambda) $ at a point $V_\bullet $ is $$ (V_1/V_0)^{\otimes {\lambda_n}} \otimes \dots \otimes  (V_n/V_{n-1})^{\otimes {\lambda_1}} .$$    Notice that this is a $ GL_n $ equivariant line bundle, so we have an action of $ GL_n $ on $ \Gamma(\Fl(\C^n), \lc(\lambda)) $.  In particular, if $ p $ is a $ T $-fixed point, then $ T $ acts on the fibre $ \lc(\lambda)_p $ over $p $.  The reason for the ``backwards'' order in the definition of $ \lc(\lambda) $ is that under this definition, the action of $ T $ on the fibre over the opposite flag is by weight $ \lambda $.

\begin{Theorem} \label{J:th8}
$\Gamma(\Fl(\C^n), \lc(\lambda)) $ is the irreducible representation of $ GL_n $ of highest weight $ \lambda $.
\end{Theorem}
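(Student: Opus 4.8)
The plan is to mimic the Borel--Weil argument for $\pb^1$ (Theorem \ref{J:thBWGL2}), combining a weight-theoretic lower bound coming from the $T$-fixed points with an upper bound coming from the $N$-orbit structure. By the reduction in Section \ref{J:se7} it suffices to treat $ \lambda \in \N^n_+ $, and I would actually prove the \emph{second} characterization in Theorem \ref{J:th4}: that $\Gamma(\Fl(\C^n),\lc(\lambda))$ has a one-dimensional space of highest weight vectors, all of weight $\lambda$. Once that is established, Theorem \ref{J:th4} immediately identifies the representation. So the real content is: (a) the space of sections is nonzero, (b) every highest weight occurring is $\le \lambda$, and (c) the $\lambda$-highest-weight space is exactly one-dimensional.

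First I would analyze the fibre of $\lc(\lambda)$ over a $T$-fixed flag. By Proposition \ref{J:th6.5} these are indexed by $\sigma \in S_n$, and the fibre over the flag associated to $\sigma$ is $(\langle e_{\sigma(1)}\rangle)^{\otimes \lambda_n}\otimes \cdots \otimes (\langle e_{\sigma(n)}\rangle)^{\otimes \lambda_1}$, on which $T$ acts by the weight $\sigma\cdot\lambda$ (the ``backwards'' convention was chosen precisely so that $\sigma = w_0$, the opposite flag, gives weight $\lambda$, as remarked before the theorem). Evaluating a section at the $T$-fixed points gives a $T$-equivariant map $\Gamma(\Fl(\C^n),\lc(\lambda)) \to \bigoplus_{\sigma\in S_n} \lc(\lambda)_{\sigma}$, so every weight of $\Gamma$ is among the $\sigma\cdot\lambda$; since $\lambda$ is dominant, all of these satisfy $\sigma\cdot\lambda \le \lambda$. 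This gives the upper bound (b) and shows that any highest weight vector must have weight $\lambda$. For nonvanishing (a) and for the one-dimensionality (c), I would use Proposition \ref{J:th7}: the $N$-orbit through the opposite flag $p_{w_0}$ is dense in $\Fl(\C^n)$, so a section is determined by its restriction to this orbit, hence by its value at $p_{w_0}$ together with the way $N$ moves it around. Concretely, a highest weight vector $s$ is $N$-invariant, so $(n\cdot s)(p_{w_0}) = n\cdot s(n^{-1}p_{w_0})$; choosing a trivialization of $\lc(\lambda)$ on the open $N$-orbit, $s$ is pinned down on a dense set by the single value $s(p_{w_0}) \in \lc(\lambda)_{p_{w_0}}$, and that fibre is one-dimensional --- giving at most a one-dimensional space of highest weight vectors. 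To see it is at least one-dimensional (equivalently, that $\Gamma \ne 0$), I would exhibit an explicit section: using the Plücker-type description, $\lc(\lambda)$ for $\lambda = \sum_k (\lambda_k - \lambda_{k+1})\,\omega_{n-k}$ is a tensor product of the line bundles $\lc(\omega_j)$ pulled back from Grassmannians $\Gr(j,\C^n)$, each of which has the tautological nonzero sections coming from $\Lambda^j\C^n$ (compare Example \ref{J:ex15} and the proof of Theorem \ref{J:thBWGL2}), and a suitable product of such sections is $N$-invariant with the right weight.

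The main obstacle is the surjectivity/nonvanishing step --- exactly the point flagged in the proof of Theorem \ref{J:thBWGL2} as ``requiring a bit more machinery.'' The weight bound and the one-dimensionality of the highest-weight space are essentially bookkeeping with $T$-fixed points and the dense $N$-orbit, but proving that $\Gamma(\Fl(\C^n),\lc(\lambda))$ actually realizes \emph{all} of $V(\lambda)$ (rather than something smaller) requires either producing enough explicit sections via the Plücker embedding and the symmetric/exterior power constructions of Section \ref{J:se2}, or invoking a vanishing theorem (Kodaira/Serre, or the Borel--Weil--Bott package) to control $\dim\Gamma$. I would take the first, more elementary route consistent with the stated aim of avoiding heavy machinery: build the top-weight section explicitly from wedge products of coordinate vectors, use $GL_n$-translates to generate a subrepresentation, and then note that a nonzero subrepresentation with a highest weight vector of weight $\lambda$ must contain $V(\lambda)$, while the weight bound forces $\Gamma$ to be contained in a sum of copies of representations with highest weight $\le\lambda$; a dimension/character count (or the one-dimensionality of the $\lambda$-weight-space computation above) then forces equality $\Gamma(\Fl(\C^n),\lc(\lambda)) \cong V(\lambda)$.
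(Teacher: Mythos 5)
The skeleton of your argument (reduce to the second characterization in Theorem \ref{J:th4}; the space of $N$-invariant sections is at most one-dimensional because such a section is determined by its value at the opposite flag $p$, whose $N$-orbit is dense) is exactly the paper's proof, and the existence step you rightly identify as the hard part is the one the paper also does not prove but delegates to a reference. However, one step of your write-up is genuinely wrong: from the $T$-equivariance of the evaluation map $\Gamma(\Fl(\C^n),\lc(\lambda)) \to \bigoplus_{\sigma\in S_n}\lc(\lambda)_\sigma$ you conclude that every weight of $\Gamma$ lies in $\{\sigma\cdot\lambda\}$. That inference requires the evaluation map to be injective, and it is not: a nonzero section can vanish at all $T$-fixed points. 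Already for $n=2$, $\lambda=(2,0)$, Theorem \ref{J:thBWGL2} gives $\Gamma(\pb^1,\lc(2,0))\cong \Sym^2\C^2$, which has the weight $(1,1)$ --- not a permutation of $(2,0)$ --- and the corresponding section (the one attached to $e_1e_2$) vanishes at both fixed points $\langle e_1\rangle$ and $\langle e_2\rangle$. In general the weights of $V(\lambda)$ fill out all lattice points $\mu\le\lambda$ in the convex hull of the orbit, so the claimed bound (b) is false, and with it your stated argument that any highest weight vector has weight $\lambda$.

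The good news is that this step is unnecessary, and the correct replacement is already contained in your setup (and is what the paper does): you only need to control the weight of $N$-invariant sections, not of all sections. If $s$ is a nonzero $N$-invariant section of weight $\mu$, then $s(p)\ne 0$ (otherwise $s$ vanishes on the dense $N$-orbit, hence everywhere); since $p$ is $T$-fixed and $T$ acts on $\lc(\lambda)_p$ by weight $\lambda$, evaluating $t\cdot s=t^\mu s$ at $p$ gives $t^\mu s(p)=t^\lambda s(p)$, so $\mu=\lambda$. With that repair, your outline coincides with the paper's, and Theorem \ref{J:th4} together with semisimplicity finishes the identification. Where you go beyond the paper is in proposing an explicit construction of the nonzero $N$-invariant section via products of Pl\"ucker-type sections; this is a legitimate, more self-contained route, but as written it is only a sketch and the bookkeeping is off: one has $\lambda=\sum_{k=1}^{n-1}(\lambda_k-\lambda_{k+1})\omega_k+\lambda_n\omega_n$ (not $\omega_{n-k}$), and $\lc(\omega_k)$ has fibre $\Lambda^k(\C^n/V_{n-k})$, i.e.\ it is pulled back along $V_\bullet\mapsto V_{n-k}$ from the Grassmannian of $(n-k)$-dimensional subspaces; verifying that the product of the sections attached to $e_1\wedge\cdots\wedge e_k$ is $N$-invariant of weight $\lambda$ is precisely the ``more machinery'' the paper alludes to, so this part needs to be written out carefully (or cited) before the proof is complete.
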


\begin{proof}
It suffices to show that $ \lc(\lambda) $ has exactly one $ N$-invariant section (up to scalar) and that this section has weight $ \lambda $.

Let $ s $ be an $N$-invariant section .  Let $ p \in \Fl(\C^n)$ denote the opposite flag.  Since $ s $ is $ N $-invariant, its values on the $ N $ orbit through $ p $ are determined by its value at $ p $.  Since this $ N $ orbit is dense, this means that its values on the whole flag variety is determined by its value at $ p$.  Put another way, the map $ \Gamma(\Fl(\C^n), \lc(\lambda))^N \rightarrow \lc(\lambda)_p $ taking a section to its value at $ p $, is a surjective map.  Hence the space of $ N $-invariant sections is at most one dimensional.  

To see that there actually is a non-zero $N$-invariant section requires more detailed understanding of the geometry of the flag variety and this is where we use that $ \lambda \in \N^n_+$.  (For a good exposition, see \cite{JL}, proof of Theorem 2.)

Finally note that if $ s $ is an $ N $-invariant section, then $ (t \cdot s)(p) = t^\lambda s(p) $ since $p $ is a $ T $-fixed point and the action of $ T $ on the fibre over $ p $ is by weight $ \lambda $.  Hence if $ s(p) \ne 0 $, then $ s $ has weight $\lambda$ as desired.
\end{proof}

\begin{Example} \label{J:ex20}
Let us consider the case $ \lambda = (1, 0, \cdots, 0) = \omega_1 $.  There is a map $ \Fl(\C^n) \rightarrow \pb^{n-1} $ which takes $ (V_0, \dots, V_n) $ to $ V_{n-1} $ (here we regard $ \pb^{n-1} $ as the variety of hyperplanes in $ \C^n $).  The line bundle $ \lc(\omega_1) $ is the pullback of the quotient line bundle $ \mathcal{O}(1) $ on $ \pb^{n-1}$.  Hence we get a map
\begin{equation*}
\Gamma(\pb^{n-1}, \mathcal{O}(1)) \rightarrow \Gamma(\Fl(\C^n), \lc(\omega_1)).
\end{equation*}
This map is an isomorphism and as in the proof of Theorem \ref{J:thBWGL2}, $\Gamma(\pb^{n-1}, \mathcal{O}(1)) \cong \C^n $.  Hence we see that 
\begin{equation*}
\C^n \cong \Gamma(\pb^{n-1}, \mathcal{O}(1)) \cong \Gamma(\Fl(\C^n), \lc(\omega_1)) \cong V({\omega_1}).
\end{equation*}
\end{Example}

\begin{Exercise} \label{J:ex21}
Carry out a similar analysis when $ \lambda = \omega_k $ and when  $\lambda= (k, 0, \dots, 0) $.
\end{Exercise}

\section{Ginzburg construction} \label{J:se14}
In the Borel-Weil construction, we started with a variety with an action of $ GL_n $ and obtained a representation of $ GL_n$.  In the Ginzburg (and geometric Satake) constructions we will start with a variety that does not carry an action of $ GL_n $ and so the construction of a representation of $ GL_n $ is more indirect.  Actually instead of getting an action of $ GL_n $, we will get a vector space with a weight decomposition and an action of the generators $ E_i, F_i $ of the Lie algebra $ \gl_n$.

Ginzburg's construction was developed by Ginzburg in \cite[Chapter 4]{Gbook}, building on earlier work of Beilinson-Lusztig-MacPherson \cite{BLM}.

\subsection{Partial flag varieties, nilpotent operators and Springer fibres} \label{J:se15}
Let us fix two positive integers $ n, N$.  We will now consider the variety $\Fl_n(\C^N) $ of all $n$-step partial flags in $ \C^N$
\begin{equation*}
\Fl_n(\C^N) = \{ 0 = V_0 \subseteq V_1 \subseteq \cdots \subseteq V_n = \C^N\}.
\end{equation*}
Note that we do not put any restrictions on the dimensions of $ V_i $, however it is clear that $ \Fl_n(\C^N) $ breaks into different connected pieces according to the dimensions of the $ V_i $.  

Let $ \mu \in \N^n $ with $ |\mu | = \mu_1 + \cdots + \mu_n = N$. Then we can consider the connected component $ \Fl_\mu(\C^N) $ of those partial flags in $ \C^N $ with $ \dim(V_i/V_{i-1}) = \mu_i $.  In particular when $ n = N $, then $ \Fl_{(1, \dots, 1)}(\C^n) $ is our flag variety $ \Fl(\C^n) $ from the previous section.

We now consider the cotangent bundle $T^* \Fl_n(\C^N)$ of this $n$-step flag variety.  We will think of a point of $T^* \Fl_n(\C^N) $ as a pair $ (X, V_\bullet) $ such that $ X \in \End(\C^N) $, $ V_\bullet \in \Fl_n(\C^N) $ and  $ X V_i \subset V_{i-1} $.

Notice that those $ X$ which occur in $T^* \Fl_n(\C^N)$ will necessarily have $ X^n = 0 $.   Hence there is a projection $ \pi : T^* \Fl_n(\C^N) \rightarrow \cN_n(\C^N) $ where $ \cN_n(\C^N) $ is the variety of $ N\times N $ matrices $ X $ with $ X^n = 0 $.  The fibres of this map are called $n$-\textbf{step Springer fibres}.  We will write $ \Fl_n(\C^N)^X $ for the fibre over the nilpotent $ X$.

\begin{Example} \label{J:ex22}
Let $N=3, n=3$.  Let $ X = \left[ \begin{smallmatrix} 0 & 1 & 0 \\ 0 & 0 & 0 \\ 0 & 0 &0 \end{smallmatrix} \right] $ be an operator on $ \C^3 $.  Let $ \mu = (1,1,1) $.  Then if $V_\bullet \in \Fl_\mu(\C^3)^X $, we see that either 
\begin{equation*}
0 \subset V_1 = \langle e_1 \rangle \subset V_2 \subset \C^3 \text{ or } 0 \subset V_1 \subset V_2 = \langle e_1, e_3 \rangle \subset \C^3
\end{equation*}
This shows that $\Fl_\mu(\C^3)^X $ has two irreducible components each of which is isomorphic to $ \pb^1$.  These $ \pb^1$s are glued together at the point $ 0 \subset \langle e_1 \rangle \subset \langle e_1, e_3 \rangle \subset \C^3 $.
\end{Example}

Recall from linear algebra, that every $N\times N $ matrix is similar to a Jordan form matrix.  If we just consider nilpotent linear operators (i.e. all eigenvalues are  $0 $) then there is a bijection between Jordan form matrices and decreasing sequences $ \nu_1 \ge \dots \ge \nu_m  $ of positive integers with $ |\nu| = N $ --- here the $ \nu_i $ are the sizes of the blocks.  If $ X $ is a nilpotent linear operator on $ \C^m $, then we will speak of its \textbf{Jordan type}, which will be such a sequence.

Let $ X \in \cN_n(\C^N) $ be a nilpotent operator with Jordan type $ \nu $.  Let $ \lambda $ be the conjugate of $ \nu $ --- this means that if we write $ \nu $ as a Young diagram where the $ \nu_i $ are the lengths of the rows, then $ \lambda_i $ will be the lengths of the columns.  In other words $ \lambda_i $ is the number of $j $ such that $ \nu_j \ge i $.  Since $ X^n = 0 $,  $ \nu_i \le n $ for all $i $ and this means that $\lambda_k = 0 $ for $ k > n $. In particular, $ \lambda $ can be regarded as a dominant weight for $ GL_n $.  Also note that $ |\lambda| = N $.

\begin{Exercise} \label{J:ex23}
Let $ X, \lambda$ be as above. Suppose that the Springer fibre $  \Fl_\mu(\C^N)^X $ is non-empty.  Show that for each $ k $,
$$ \mu_1 + \dots + \mu_k  \le  \lambda_1 + \cdots + \lambda_k .$$
(Hint: first show that $ \lambda_1 + \dots + \lambda_k = \dim(\ker X^k) $.)

Conclude that as weights of $ GL_n $, we have $ \mu \le \lambda $.

Show that $ \Fl_\lambda(\C^N)^X $ is a single point.
\end{Exercise}

The intuition behind the Ginzburg construction is that we will construct a representation of $ \gl_n $ on $\Fl_n(\C^N)^X $ where the weight spaces are the $ \Fl_\mu(\C^N)^X $ and where $E_i$ acts on a flag $ V_\bullet $ to give the sum of all the flags $ V'_\bullet $ with $V_i \subset V'_i $ and $ V_j = V'_j $ for $ j\ne i $.  Of course, this description does not really make sense since $ \Fl_n(\C^N)^X $ is a variety, not a vector space.  So we will need a way to get a vector space out of a variety and for this we will use homology.

\subsection{Borel-Moore homology} \label{J:se16}
For the purpose of this construction, we will need to consider \textbf{Borel-Moore homology} of certain varieties.  

For a well-behaved topological space $ Y $ (for example a complex algebraic variety), its Borel-Moore homology is the homology of the complex of locally finite, but possibly unbounded, $i$-chains in $Y $.  This means that $ H_i(Y) $ is the group of (formal $ \C$-linear combinations of) (possibly singular and unbounded) closed $i$-dimensional subspaces of $ Y $ modulo those which are boundaries of $i+1$-dimensional subspaces.

The important properties that we will need for Borel-Moore homology are 
\begin{enumerate}
\item Pullback: if $ f : X \rightarrow Y $ is a locally trivial fibre bundle with smooth fibre of dimension $ d$, then there is a pull-back $ f^* : H_i(Y) \rightarrow H_{i+d}(X) $.
\item Pushforward: if $ f : X \rightarrow Y $ is a proper map (ie the pull back of a compact set is compact), then there is a push forward $f_* : H_i(X) \rightarrow H_i(Y) $.
\item Intersection pairing: if $M $ is a smooth manifold of real dimension $ m $ and $Z, Z', Z'' $ are three closed (well-behaved) subsets such that $ Z'' $ contains $ Z \cap Z' $, then there is an intersection product $ \cap_M : H_i(Z) \times H_j(Z') \rightarrow H_{i+j-m} (Z'')$.
\item Fundamental class: if $ Y $ is a algebraic variety of complex dimension $ n $, then we have a fundamental class $[Y] \in H_{2n}(Y) $.  If $ Y $ has multiple irreducible components $ Y_1, \dots, Y_k $ then the fundamental classes of these components $ [Y_1], \dots, [Y_k] $ form a basis for $H_{2n}(Y) $.  In this circumstance we will write $ H_\tp(Y) = H_{2n}(Y) $.
\end{enumerate}

\subsection{The main construction} \label{J:se17}
Fix $ n, N $ as above.

For each $ i = 1, \dots, n-1 $  consider the following subvariety $Z_i \subset T^* \Fl_n(\C^N) \times T^* \Fl_n(\C^N) $ defined by 
\begin{equation*}
\begin{aligned}
Z_i = \{ &(X, V_\bullet, V'_\bullet) : (X, V_\bullet), (X, V'_\bullet) \in T^* \Fl_n(\C^N), \\
&V_j = V'_j \text{ for } j \ne i, \text{ and } V_i \subset V'_i \text{ with } \dim(V'_i) = \dim(V_i) + 1 \}
\end{aligned}
\end{equation*}

Let $ \pi_1, \pi_2$ denote the two projections $ T^* \Fl_n(\C^N) \times T^* \Fl_n(\C^N) \rightarrow T^* \Fl_n(\C^N) $.

For any $ X \in \cN_n(\C^N) $ we define $ E_i : H_*(\Fl_n(\C^N)^X) \rightarrow H_*(\Fl_n(\C^N)^X) $ by
\begin{equation} \label{eq:EiGinz}
\begin{aligned}
H_*(\Fl_n(\C^N)^X) \xrightarrow{\pi_1^*} &H_*(\Fl_n(\C^N)^X \times T^* \Fl_n(\C^N)) \\
&\xrightarrow{\cap_M [Z_i]} H_*(\Fl_n(\C^N)^X \times \Fl_n(\C^N)^X) \xrightarrow{{\pi_2}_*} H_*(\Fl_n(\C^N)^X)
\end{aligned}
\end{equation}
where $ M = T^* \Fl_n(\C^N) \times T^* \Fl_n(\C^N) $ is the ambient variety where the intersection takes place.

We also define $ F_i : H_*(\Fl_n(\C^N)^X) \rightarrow H_*(\Fl_n(\C^N)^X) $ in the same manner, by interchanging the roles of $ \pi_1 $ and $ \pi_2 $ in (\ref{eq:EiGinz}).

Note that by the very definition, we see that if $ (X, V, V') \in Z_i $ and $ V \in \Fl_\mu(\C^N) $ then $ V' \in \Fl_{\mu + \alpha_i}(\C^N) $, thus we see that
\begin{equation*}
Z_i = \cup_\mu {Z_i}_\mu, \ \text{ with } {Z_i}_\mu \subset T^* \Fl_\mu(\C^N) \times T^* \Fl_{\mu+ \alpha_i}(\C^N)
\end{equation*}

This shows us that $ E_i : H_*(\Fl_\mu(\C^N)^X) \rightarrow H_*(\Fl_{\mu+ \alpha_i}(\C^N)^X) $ for each $ \mu $, and vice versa for $ F_i $.

\begin{Theorem} \label{J:th9}
This defines a representation of $ GL_n $ with weight decomposition $H_*(\Fl_n(\C^N)^X) = \oplus_{\mu} H_*(\Fl_\mu(\C^N)^X) $ and action of Chevalley generators given by $ E_i, F_i $.
\end{Theorem}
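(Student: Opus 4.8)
The plan is to realise $E_i$ and $F_i$ as convolution operators, to verify the relations of $\gl_n$ among them, and then to integrate the resulting Lie algebra action to $GL_n$. To organise the verification of relations, recall that $\gl_n$ is the quotient, by the Serre relations, of the Lie algebra $\widetilde{\gl_n}$ generated by $E_i, F_i$ $(1 \le i \le n-1)$ together with the abelian subalgebra $\tf$, subject only to $[h, h'] = 0$, $[h, E_i] = \alpha_i(h) E_i$, $[h, F_i] = -\alpha_i(h) F_i$ for $h, h' \in \tf$, and $[E_i, F_j] = \delta_{ij} H_i$, where $H_i \in \tf$ is the coroot of $\alpha_i$. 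It is a standard fact that any $\widetilde{\gl_n}$-module on which $\tf$ acts semisimply with weights in $\Z^n$ and each $E_i, F_i$ acts locally nilpotently automatically descends to $\gl_n$. So it suffices to produce a $\widetilde{\gl_n}$-action of this kind.

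First I would set up the convolution formalism, as in \cite[Chapter 2]{Gbook}: for smooth varieties $M_1, M_2, M_3$ and closed correspondences $A \subset M_1 \times M_2$, $B \subset M_2 \times M_3$ with the relevant projections proper, there is an associative convolution product $H_*(A) \otimes H_*(B) \to H_*(B \circ A)$ obtained by pulling back, intersecting inside $M_1 \times M_2 \times M_3$, and pushing forward. The operators of (\ref{eq:EiGinz}) are precisely convolution against $[Z_i]$, while $F_i$ is convolution against the transpose $Z_i^{t}$; the properness needed holds because $\Fl_n(\C^N)^X$ is projective, and because the $Z_i$ keep both flags over the one fixed nilpotent $X$, so that these convolutions stay inside $H_*(\Fl_n(\C^N)^X)$. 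Declare $H_*(\Fl_\mu(\C^N)^X)$ to carry the $\tf$-weight $\mu$. Since $\tf$ then acts by scalars on each summand, $[h, h'] = 0$ is automatic; since $E_i$ and $F_i$ shift the grading by $\pm \alpha_i$ (as already observed), the relations $[h, E_i] = \alpha_i(h) E_i$ and $[h, F_i] = -\alpha_i(h) F_i$ follow at once; and since $H_*(\Fl_n(\C^N)^X)$ is finite dimensional with only finitely many nonzero weight spaces, $E_i$ and $F_i$ are automatically locally nilpotent.

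It remains to verify $[E_i, F_j] = \delta_{ij} H_i$. For $i \neq j$ a direct comparison of correspondences gives $E_i F_j = F_j E_i$: the composites $Z_i \circ Z_j^{t}$ and $Z_j^{t} \circ Z_i$ describe the same correspondence (modifying the flag at the $i$-th and $j$-th steps produces the same pair of flags in either order, with a uniquely determined intermediate flag), and there is no excess in the intersections computing the two convolutions. The case $i = j$ is the heart of the matter: the composites $Z_i \circ Z_i^{t}$ and $Z_i^{t} \circ Z_i$ both contain the diagonal, and the difference of their two diagonal contributions is given by an excess--(self--)intersection computation on the space of intermediate subspaces $V_i$ with $V_{i-1} \subset V_i \subset V_{i+1}$ and $X V_i \subset V_{i-1}$; this reduces to the top Chern class of a tautological line bundle, which evaluates on the $\mu$-weight space to the scalar $\langle \mu, \alpha_i \rangle = \mu_i - \mu_{i+1}$, namely the action of $H_i$ (compare Proposition \ref{J:th5}). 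I expect this last computation --- pinning down the correct tautological bundle and evaluating its Euler class against the relevant fibre --- to be the main obstacle; the remaining relations are formal or combinatorial.

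By the reduction above, $H_*(\Fl_n(\C^N)^X)$ is then a $\gl_n$-module on which $\tf$ acts semisimply with weights in $\Z^n$, and on which the central element $I \in \tf$ acts by the scalar $N$ on every weight space; such a $\gl_n$-module integrates to an algebraic representation of $GL_n$, with the asserted weight decomposition and Chevalley-generator action. Alternatively, one can bypass the descent statement and verify the Serre relations $\mathrm{ad}(E_i)^2 E_{i\pm 1} = 0$ (and the $F$ analogue, and $[E_i, E_j] = 0$ for $|i-j| \ge 2$) directly, by applying the same method to iterated compositions such as $Z_i \circ Z_i \circ Z_{i\pm 1}$, the excess-intersection multiplicities there combining into the required vanishing alternating sums; but the integrability route is shorter.
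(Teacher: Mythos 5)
You should first note that the paper gives no proof of Theorem \ref{J:th9} at all: it is quoted from Ginzburg's construction \cite[Chapter 4]{Gbook}. Your outline is essentially the standard route taken there (and in Nakajima's generalization): interpret $E_i,F_i$ as convolution with $[Z_i]$ and its transpose, check the weight relations and $[E_i,F_j]=\delta_{ij}H_i$, invoke the standard fact that an integrable module over the presentation without Serre relations descends to $\gl_n$, and integrate to $GL_n$ using finite-dimensionality, integrality of the weights, and the fact that the identity acts by $N$ on every summand. That frame is correct and is the right reduction.

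The genuine gap is the step you yourself flag: $[E_i,F_i]=H_i$ is asserted, not proven, and your description of it as ``the top Chern class of a tautological line bundle'' undersells what is required. The diagonal contributions of the two composites are pushforwards of Euler classes of excess bundles along the bundles of intermediate subspaces, whose fibres over a flag $V_\bullet$ in the Springer fibre are (for the two orders of composition) projective spaces of the form $\pb\bigl((V_{i+1}\cap X^{-1}V_{i-1})/V_i\bigr)$ and $\pb\bigl((V_i/(V_{i-1}+XV_{i+1}))^*\bigr)$; the dimensions of these fibres jump as $V_\bullet$ moves in $\Fl_\mu(\C^N)^X$ (only their difference is constant, equal to $\mu_{i+1}-\mu_i$), so a naive fibrewise excess-intersection argument does not directly apply, and the scalar $\mu_i-\mu_{i+1}$ emerges only as the difference of the two pushforwards after cancellation of lower-order Segre/Chern terms. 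The clean way, and Ginzburg's, is to prove the relations once and for all in the convolution algebra $H_*(Z)$ of the Steinberg-type variety $Z=T^*\Fl_n(\C^N)\times_{\cN_n(\C^N)}T^*\Fl_n(\C^N)$, where the ambient spaces are smooth and the correspondences are uniform in $X$, and then transport them to each fibre $H_*(\Fl_n(\C^N)^X)$ via the $H_*(Z)$-module structure given by convolution; this also takes care of your unverified claim in the $i\neq j$ case, where the set-theoretic identification of the composites must still be supplemented by a transversality or multiplicity check. Until that computation (or a citation of it) is supplied, the proposal is a correct plan rather than a proof.
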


\subsection{The irreducible representation} \label{J:se18}
In general this representation will be reducible.  Now let us describe an irreducible subrepresentation.

\begin{Theorem} \label{J:th10}
Consider $ H_\tp(\Fl_n(\C^N)^X) = \oplus_{\mu} H_\tp(\Fl_\mu(\C^N)^X)$.  This subspace is invariant under the action of $ \gl_n $.  Moreover it is an irreducible representation of highest weight $ \lambda $ (where $ \lambda $ is the conjugate of the Jordan type of $ X $).  In particular there is a basis for $ V(\lambda)_\mu $ labelled by irreducible components of $ \Fl_\mu(\C^N)^X$.
\end{Theorem}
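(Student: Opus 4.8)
The plan is to establish in turn that $H_\tp(\Fl_n(\C^N)^X)$ is a $\gl_n$-submodule, that it contains a copy of $V(\lambda)$, and that it is no larger.

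\emph{Step 1: $\gl_n$-stability.} By construction $E_i$ maps $H_*(\Fl_\mu(\C^N)^X)$ into $H_*(\Fl_{\mu+\alpha_i}(\C^N)^X)$, so I would first track the homological degree through the three operations of (\ref{eq:EiGinz}), using the conventions of Section \ref{J:se16} together with the complex dimensions of $\Fl_\mu(\C^N)$ and of the correspondence component ${Z_i}_\mu$. The outcome is that $E_i$ carries the top group $H_{2\dim\Fl_\mu(\C^N)^X}(\Fl_\mu(\C^N)^X)$ into $H_{2\dim\Fl_{\mu+\alpha_i}(\C^N)^X}(\Fl_{\mu+\alpha_i}(\C^N)^X)$, and likewise for $F_i$, provided that whenever the two Springer fibres are both non-empty one has
\[
\dim\Fl_\mu(\C^N)^X - \dim\Fl_{\mu+\alpha_i}(\C^N)^X \;=\; \langle\mu,\alpha_i\rangle + 1 ;
\]
note that this is the same dimension jump $\dim\Fl_\mu(\C^N) - \dim\Fl_{\mu+\alpha_i}(\C^N) = \langle\mu,\alpha_i\rangle+1$ exhibited by the ambient partial flag varieties. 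This identity is Spaltenstein's dimension formula for $n$-step Springer fibres, which I would quote. Granting it, $\oplus_\mu H_\tp(\Fl_\mu(\C^N)^X)$ is stable under all $E_i$ and $F_i$, so by Theorem \ref{J:th9} it is a $\gl_n$-submodule of $H_*(\Fl_n(\C^N)^X)$, hence (since $GL_n$ is connected) a $GL_n$-subrepresentation.

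\emph{Step 2: a copy of $V(\lambda)$.} By Exercise \ref{J:ex23}, $\Fl_\mu(\C^N)^X$ is empty unless $\mu\le\lambda$, and $\Fl_\lambda(\C^N)^X$ is a single point; hence every weight of $H_\tp$ is $\le\lambda$ and the $\lambda$-weight space $H_\tp(\Fl_\lambda(\C^N)^X)$ is one-dimensional, spanned by the class $v$ of that point. Since $\lambda+\alpha_i\not\le\lambda$, the group $H_\tp(\Fl_{\lambda+\alpha_i}(\C^N)^X)$ vanishes, so $E_iv=0$ for all $i$; thus $v$ is a highest weight vector of weight $\lambda$, and by Theorem \ref{J:th4} (together with complete reducibility, Theorem \ref{J:th3}, and the one-dimensionality of the $\lambda$-weight space) the submodule of $H_\tp$ generated by $v$ is isomorphic to $V(\lambda)$.

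\emph{Step 3: equality.} Write $H_\tp = V(\lambda)\oplus C$ with $C$ a submodule (complete reducibility again); I must rule out $C\ne0$. If $C\ne0$, pick a weight $\mu$ of $C$ maximal for $\le$; then $E_i(C_\mu)\subseteq C_{\mu+\alpha_i}=0$ for every $i$, so $C_\mu$ consists of highest weight vectors, and since $C_\lambda=0$ this forces $\mu<\lambda$. So the crux is to show that $H_\tp$ has no highest weight vector of weight $<\lambda$. Here one must examine the convolution action on fundamental classes: for an irreducible component $C'$ of $\Fl_\mu(\C^N)^X$, the class $E_i[C']$ is a $\Z$-linear combination of the classes of the components of $\Fl_{\mu+\alpha_i}(\C^N)^X$ with coefficients the intersection multiplicities dictated by $Z_i$. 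Using Spaltenstein's parametrization of $\mathrm{Irr}(\Fl_\mu(\C^N)^X)$ by semistandard Young tableaux of shape $\lambda$ and content $\mu$, under which the operators $E_i$ realize the crystal raising operators and so are triangular with non-zero pivots in an appropriate order on tableaux, one concludes that no non-zero weight-$\mu$ class with $\mu<\lambda$ is annihilated by all the $E_i$. Hence $C=0$, so $H_\tp = V(\lambda)$ is irreducible of highest weight $\lambda$; the basis assertion is then property (4) of Borel-Moore homology applied to $H_\tp(\Fl_\mu(\C^N)^X)=V(\lambda)_\mu$. I expect Step 3, equivalently the combinatorics of $n$-step Springer fibres, to be the main obstacle, and the degree bookkeeping of Step 1 rests on the very same dimension formula. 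A shortcut for Step 3 is to quote instead that $\#\mathrm{Irr}(\Fl_\mu(\C^N)^X) = K_{\lambda\mu} = \dim V(\lambda)_\mu$, whence $\dim H_\tp = \dim V(\lambda)$ and equality is immediate from Step 2.
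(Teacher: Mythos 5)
Your Steps 1 and 2 are exactly the content of the paper's own (very brief) argument: the paper says that "carefully examining the homological degrees in (\ref{eq:EiGinz})" gives $\gl_n$-invariance of the top part, and that Exercise \ref{J:ex23} identifies $\lambda$ as the highest weight; your explicit bookkeeping, reducing the degree check to Spaltenstein's equidimensionality statement $\dim\Fl_\mu(\C^N)^X=\dim\Fl_\mu(\C^N)-\tfrac12\dim(GL_N\cdot X)$, which gives the jump $\langle\mu,\alpha_i\rangle+1$ and matches the jump for the correspondence ${Z_i}_\mu$ (Lagrangian, of dimension $\dim\Fl_\mu+\dim\Fl_{\mu+\alpha_i}$), is a correct filling-in of that sketch. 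Where you genuinely diverge is Step 3: the paper explicitly does not prove irreducibility at this point ("this is not as easy") and instead reproves the whole theorem in Section \ref{J:se26} by combining the geometric Satake equivalence with skew Howe duality (Theorem \ref{J:th18}), which produces the full $GL_n$-module structure and its identification with $V(\lambda^\vee)$ in one stroke. Your closing argument is a more elementary, sheaf-free alternative, but be aware of its costs: your route (a) — that the $E_i$ are triangular with nonzero pivots with respect to Spaltenstein's tableau parametrization, realizing crystal operators — is itself a substantial theorem and, as written, is an assertion rather than a proof; your shortcut (b) is sound, but it imports Spaltenstein's nontrivial result that $\#\mathrm{Irr}(\Fl_\mu(\C^N)^X)=K_{\lambda\mu}$ (which is proved geometrically, via the stratification by Jordan-type sequences, so there is no circularity), after which the dimension count $\dim H_\tp=\sum_\mu K_{\lambda\mu}=\dim V(\lambda)$ together with your Step 2 forces equality, and the basis statement follows from property (iv) of Borel-Moore homology as you say. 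In short: same skeleton as the paper for invariance and highest weight, a different and more classical (but citation-heavy) finish for irreducibility, versus the paper's conceptual derivation from geometric Satake.
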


Carefully examining the homological degrees in (\ref{eq:EiGinz}) shows that $ H_\tp(\Fl_n(\C^N)^X) $ is invariant under the action of $ \gl_n $ (and hence $ GL_n$).  Also by exercise \ref{J:ex23}, we see that $ \lambda $ is the highest weight of the representation $ H_\tp(\Fl_n(\C^N)^X) $.  So to complete the proof of this theorem, we would need to know that $H_\tp(\Fl_n(\C^N))^X $ is irreducible, but this is not as easy.

One nice feature of this Ginzburg construction is that it provides us with a basis of $ V(\lambda)_\mu $.  This allows us to answer combinatorial questions --- for example we can determine $\chi_\lambda(\mu) $ by counting the irreducible components of $ \Fl_\mu(\C^N)^X $.

\begin{Example} \label{J:ex24}
Consider the case $ n=2$ and $ \lambda = (k,0) $.  So $ N = k $.  Then we see that $ X = 0 $ and for $ \mu = (a,b) $ we see that $\Fl_\mu(\C^k)^X = \Gr(a, \C^k)$, the Grassmannian of $ a $ dimensional subspaces of $\C^n $.  In particular it is an irreducible variety and so we see that $ \chi_\lambda(\mu) = 1 $ (which we already knew from our description $ V_\lambda = \Sym^k \C^2 $).
\end{Example}

\begin{Exercise} \label{J:ex25}
We continue from Example \ref{J:ex22} (so $ \lambda = (2,1,0)$ and $ \mu = (1,1,1)$).  We saw that $ \Fl_\mu(\C^3)^X $  has two irreducible components.  On the another hand we already knew that $ V(\lambda)_\mu $ was 2-dimensional from Example \ref{J:ex13}.
\end{Exercise}

\begin{Exercise} \label{J:ex26}
Continue with $ \lambda = (2,1,0) $.  We have defined the operator $$ E_1 : H_\tp(\Fl_{(1,1,1)}(\C^3)^X) \rightarrow H_\tp(\Fl_{(2,0,1)}(\C^3)^X).$$  From the previous exercise, $  H_\tp(\Fl_{(1,1,1)}(\C^3)^X) $ is 2-dimensional with a basis given by the two irreducible components.  On the other hand $\Fl_{(2,0,1)}(\C^3)^X)$ is a point (as in \ref{J:ex23}).  So with respect to this basis $ E_1 $ can be written as $1 \times 2$ matrix.  Show that it is $ [2 \ 1] $.
\end{Exercise} 

\section{Geometric Satake correspondence} \label{J:se19}
The following construction is part of the geometric Satake correspondence which is due to Lusztig \cite{L}, Ginzburg \cite{Gsat}, and Mirkovic-Vilonen \cite{MV}.  

\subsection{The varieties $\Gr^\lambda $} \label{J:se20}
For each polynomial dominant weight $ \lambda \in \N^n_+ $, we will construct a variety $ \Gr^\lambda$.  

Consider the (infinite-dimensional) $\C$-vector space $ \C[z] \otimes \C^n $.  So if $ \C^n $ has basis $ e_1, \dots, e_n$, then $ \C[z]\otimes \C^n $ has a basis $ \{ z^k e_i \} $ where $ k = 0, 1,  \dots $ and $ i = 1, \dots, n $.  Define a linear operator 
\begin{equation*}
\begin{aligned}
 X : \C[z]\otimes \C^n &\rightarrow \C[z] \otimes \C^n \\
  z^k e_i &\mapsto 
  \begin{cases} z^{k-1} e_i \text{ if } k \ge 1 \\
    0 \text{ if } k = 0 
    \end{cases}
\end{aligned}
\end{equation*}

Let $ \Gr^\ge = \{ L \subset \C[z] \otimes \C^n : X L \subset L \}$ be the ind-variety of all finite-dimensional subspaces of this vector space which are $ X $ invariant.  This is called the positive part of the \textbf{affine Grassmannian}.

Let $ L \in \Gr^\ge$.  We can consider the restriction of $ X $ to $ L $.  This will be a nilpotent operator and hence it will have a Jordan type.  Note that since $ \dim ker(X) = n $, it follows that $ \dim ker(X|_L) \le n $ which implies that $ X|_L $ has at most $n$ Jordan blocks.  Hence the Jordan type of $ X|_L $ can be regarded as an element of $ \N^n_+$.

We define
\begin{equation*}
\Gr^\lambda := \{ L \subset \C[z] \otimes \C^n : XL \subset L, \text{ and } X|_L \text{ has Jordan type } \lambda \}.
\end{equation*}
It is a locally closed subset in $\Gr^\ge $ and we may take its closure to obtain $ \overline{\Gr^\lambda} $.

It is easy to see exactly which subspaces we pick up in the closure --- in the closure the Jordan type of $ X|_L $ becomes more ``special'', i.e. more like the zero matrix.  This can be expressed as follow.
\begin{Lemma}
$$\overline{\Gr^\lambda} = \bigcup_{\mu \in \N^n_+ : \mu \le \lambda} \Gr^\mu $$
\end{Lemma}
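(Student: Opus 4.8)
The plan is to establish the two inclusions separately, after first cutting down to a fixed finite-dimensional Grassmannian. First, I would observe that $\mu \le \lambda$ forces $|\mu| = |\lambda| =: N$, since each $\alpha_i$ has coordinate sum $0$; so both sides of the asserted equality consist of $X$-invariant subspaces of dimension exactly $N$. Every such subspace $L$ is contained in $\ker X^N$ (as $X|_L$ is nilpotent), and $\ker X^N$ is finite-dimensional (it is spanned by the $z^j e_i$ with $j < N$). Hence $\Gr^\ge_N := \{ L : XL \subset L,\ \dim L = N \}$ is a closed --- thus projective --- subvariety of the usual Grassmannian $\Gr(N, \ker X^N)$; moreover, since $\dim L$ is locally constant on $\Gr^\ge$, the closures of $\Gr^\lambda$ in $\Gr^\ge$, in $\Gr^\ge_N$, and in $\Gr(N, \ker X^N)$ all coincide. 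Inside $\Gr^\ge_N$ sit the locally closed strata $\Gr^\mu$, one for each partition $\mu$ of $N$ with at most $n$ parts, and the lemma is precisely the description of the closure order of this finite stratification.

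For the inclusion $\overline{\Gr^\lambda} \subseteq \bigcup_{\mu \le \lambda} \Gr^\mu$, the engine is upper semicontinuity. The relevant linear-algebra input (the fact behind the hint to Exercise~\ref{J:ex23}) is that for $L \in \Gr^\mu$ one has $\dim(L \cap \ker X^k) = \sum_i \min(\mu_i, k) = \mu^t_1 + \dots + \mu^t_k$, where $\mu^t$ is the conjugate partition. Since $L' \mapsto \dim(L' \cap \ker X^k)$ is upper semicontinuous on $\Gr(N, \ker X^N)$, the subset $\{ L' \in \Gr^\ge_N : \dim(L' \cap \ker X^k) \ge \lambda^t_1 + \dots + \lambda^t_k \}$ is closed and contains $\Gr^\lambda$, hence contains $\overline{\Gr^\lambda}$. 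So any $L \in \overline{\Gr^\lambda}$, of Jordan type $\mu$, satisfies $\mu^t_1 + \dots + \mu^t_k \ge \lambda^t_1 + \dots + \lambda^t_k$ for all $k$; combined with $|\mu| = N$ (take $k$ large), this says exactly $\mu^t \ge \lambda^t$ in dominance order, which is equivalent to $\mu \le \lambda$.

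For the reverse inclusion $\bigcup_{\mu \le \lambda} \Gr^\mu \subseteq \overline{\Gr^\lambda}$, I would argue by producing explicit degenerations. By transitivity of closure, and since $\lambda$ can be reached from any $\mu \le \lambda$ by a finite sequence of moves that each push one box to an earlier row (remaining a partition of $N$ at every step), it suffices to treat the case where $\lambda$ arises from $\mu$ by a single such box-move. Given $L \in \Gr^\mu$, I would fix a Jordan basis for $X|_L$ and write down a one-parameter family $L_t \subset \C[z] \otimes \C^n$, defined for $t$ in a small disk, obtained by lifting the affected sub-chain one step (using that $X$ is surjective on the ambient space) and scaling the new top vector by $t$; the transparent model is $n = 2$, $\mu = (1,1)$, $\lambda = (2)$, where $\langle e_1, e_2 \rangle$ is deformed to $\langle tze_1 + e_2,\ e_1 \rangle$, a single Jordan block of size $2$ once $t \ne 0$. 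One then checks four things: $\dim L_t = N$, $X L_t \subset L_t$, $X|_{L_t}$ has Jordan type $\lambda$ for $t \ne 0$, and $L_t \to L$ as $t \to 0$; the last two give $L = L_0 \in \overline{\Gr^\lambda}$.

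The hard part is exactly this last construction: producing the family $L_t$ for a general elementary box-move starting from an arbitrary $L \in \Gr^\mu$, and checking that the generic member really has type $\lambda$ rather than something smaller. The $\pb^1$-model is immediate, but tracking how the two affected Jordan chains of $X|_L$ recombine in the general case takes some care. An alternative route, at the price of heavier external input, is to identify $\Gr^\ge$ with the relevant part of the affine Grassmannian of $GL_n$ --- under which the strata $\Gr^\lambda$ become orbits of the arc group and the desired closure relation is the classically known one --- or to deduce the statement from the theory of degenerations of nilpotent operators, where $\overline{\mathcal O_\lambda} \supseteq \mathcal O_\mu$ if and only if $\mu \le \lambda$.
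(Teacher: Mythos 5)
The paper itself offers no argument for this lemma (it is asserted with ``it is easy to see\dots''), so your proposal is supplying a proof rather than paralleling one. Your first inclusion is complete and correct: cutting down to the projective variety of $N$-dimensional $X$-stable subspaces of $\ker X^N$, using upper semicontinuity of $L \mapsto \dim(L \cap \ker X^k)$ together with $\dim(L \cap \ker X^k) = \mu^t_1 + \cdots + \mu^t_k$ for $L$ of type $\mu$, and the fact that conjugation reverses dominance, does give $\ogl \subseteq \bigcup_{\mu \le \lambda} \Gr^\mu$ with no gaps (this is also exactly the linear algebra behind Exercise \ref{J:ex23}).

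The genuine gap is the one you flag yourself: the reverse inclusion. Reducing to a single box move via covers in the dominance order is fine, but you never construct the family $L_t$ for an \emph{arbitrary} $L \in \Gr^\mu$, and doing so directly is awkward because the affected Jordan chains of $X|_L$ sit in no normal position. The standard way to close this is to insert one more observation: the group of linear automorphisms of $\C[z]\otimes\C^n$ commuting with $X$ preserves every stratum $\Gr^\nu$, hence preserves $\ogl$, and acts transitively on each $\Gr^\mu$ (this is the elementary-divisor/Smith normal form statement in the lattice model, or equivalently that $\C[z]\otimes\C^n$ is a direct sum of $n$ copies of a divisible, i.e.\ injective, torsion module, so an isomorphism $L \to L'$ of $\C[X]$-modules extends to the ambient space). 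Granting that, it suffices to degenerate to the single point $L_\mu$, where your $\pb^1$ model globalizes painlessly: if $\lambda$ is obtained from $\mu$ by moving a box from row $j$ to row $i$ ($i<j$), let $L_t$ be the span of the $X$-orbits of $z^{\mu_k-1}e_k$ for $k \ne i,j$, of $z^{\mu_i-1}e_i$, and of $t z^{\mu_i}e_i + z^{\mu_j-1}e_j$; then $\dim L_t = N$ for all $t$, $L_0 = L_\mu$, and for $t \ne 0$ one checks $L_t$ is the $\C[X]$-module direct sum of a cyclic module of dimension $\mu_i+1$, one of dimension $\mu_j-1$, and the untouched rows, so $L_t \in \Gr^\lambda$. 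Without either this transitivity step or a worked-out general recombination of chains, the inclusion $\bigcup_{\mu\le\lambda}\Gr^\mu \subseteq \ogl$ remains unproved. Also note that your fallback of quoting the closure order of arc-group orbits is simply a citation of the lemma itself in the lattice model, and the closure order of nilpotent orbits in $\End(\C^N)$ does not transfer immediately, since here the degeneration must take place among subspaces of the fixed pair $(\C[z]\otimes\C^n, X)$, not among operators; relating the two settings honestly requires something like the Mirkovic--Vybornov comparison rather than a one-line deduction.
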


\begin{Example} \label{J:ex26'}
Suppose that $ \lambda = \omega_k = (1, \dots, 1, 0, \dots, 0) $.  Having Jordan type $ \omega_k $ means that $ X|_L $ is 0 and that $ \dim L = k $.  Hence we see that $ \Gr^\lambda $ is isomorphic to the Grassmannian of $ k $ dimensional subspaces of $ ker(X)=\C^n $.  Note that in this case, $ \Gr^\lambda = \overline{\Gr^\lambda} $.
\end{Example}

\begin{Exercise}\label{J:ex27}
Describe $ \overline{Gr^{(2,0)}}$.
\end{Exercise}

It is not immediately obvious that these varieties $ \Gr^\lambda $ are non-empty.  Let us demonstrate this now.  For each $ \mu  \in \N^n $, we define a subspace $ L_\mu \subset \C[z] \otimes \C^n $ as the subspace generated by the action of $ X $ on $ \{ z^{\mu_1 - 1} e_1, \dots, z^{\mu_n -1} e_n \} $.

\begin{Exercise} \label{J:ex28}
$L_\lambda \in \Gr^\lambda $.  More generally $ L_{w \lambda} \in \Gr^\lambda $ for all $ w \in S_n$.
\end{Exercise}

We will construct the representation $ V(\lambda) $ of $ GL_n$ using these varieties $ \ogl$.

\subsection{Intersection homology and MV cycles} \label{J:se21}
Recall that the homology of smooth manifolds obeys Poincar\'e duality.  However, the variety $ \ogl $ is usually singular.  For well-behaved compact topological spaces (for example complex projective varieties), there is an invariant, \textbf{intersection homology}, which does obey Poincar\'e duality.  It is defined by restricting the possible intersections between the chains and the singular strata.

More precisely, suppose that $ Y $ admits a stratification $ Y = Y_n \supset Y_{n-1} \supset \cdots Y_0 $, which means that the $ Y_k $ are closed subsets such that $ Y_k \smallsetminus Y_{k-1} $ is a manifold of dimension $ 2k$ (and some other conditions).  Then $ IH_*(Y) $ is the homology of the complex of $ i $-chains $ Z \subset Y $ such that $\dim(Z \cap Y_{n-k}) \le i - k $.  

We will now describe a basis for the intersection homology of $ \ogl$ using a type of Bialynicki-Birula decomposition.

Consider the action of the torus $ T=(\Cx)^n $ on $\C[z]\otimes \C^n $ (acting on the second tensor factor).  This action commutes with the operator $ X $ and hence gives us an action on the variety of $ X$-invariant subspaces.  In particular it acts on $ \ogl$.

\begin{Exercise} \label{J:ex29}
The fixed points of this $T$-action are precisely the subspaces $ L_\mu $ described above.
\end{Exercise}

Now, define a map $ \Cx \hookrightarrow (\Cx)^n $ by $ s \mapsto \left[ \begin{smallmatrix} s^{n-1} & & \\ & \ddots & \\ & & 1 \end{smallmatrix} \right]$.  Then given a point $L \in \ogl $, we can consider $ \lim_{s \rightarrow 0 } s \cdot L $.  This limit will exist since $ \ogl $ is proper.  For each $ \mu $ such that $ L_\mu \in \ogl $, we consider the subvariety
\begin{equation*}
\{ L \in \ogl : \lim_{s \rightarrow 0} s \cdot L = L_\mu \}
\end{equation*}
The components of this subvariety are called \textbf{Mirkovic-Vilonen cycles} for $ \ogl $ of weight $\mu $.

\begin{Theorem} \label{J:th11}
The set of all MV cycles for $ \ogl $ forms a basis for $ IH_*(\ogl) $.  In particular the MV cycles of weight $\mu $ contribute to $ IH_{2k}(\ogl)$ where $ k = \het(\lambda - \mu) $.
\end{Theorem}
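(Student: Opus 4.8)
The plan is to combine the Bialynicki-Birula decomposition coming from the chosen one-parameter subgroup with the known purity of the intersection cohomology of affine Schubert varieties.

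First I would record the geometry of the action $ s \mapsto \mathrm{diag}(s^{n-1},\dots,s,1) $. This $ \Cx $ has the same weight-space decomposition of $ \C[z]\otimes\C^n $ as the full torus $ T $ (each subspace $ \C[z]\otimes e_i $ is a single weight space for both), so it has the same fixed locus on $ \ogl $, which by Exercise~\ref{J:ex29} is the finite set $ \{ L_\mu : L_\mu \in \ogl \} $. For each such $ \mu $ put $ A_\mu = \{ L \in \ogl : \lim_{s\to 0} s\cdot L = L_\mu \} $. These are locally closed, they partition $ \ogl $ (every point flows to a fixed point since $ \ogl $ is proper), and there is a partial order $ \preceq $ on the relevant $ \mu $, determined by the action, with $ \overline{A_\mu} \subseteq \bigcup_{\nu \preceq \mu} A_\nu $; hence the downward-closed unions $ \ogl_{\preceq\mu} := \bigcup_{\nu\preceq\mu} A_\nu $ form a chain of closed subvarieties -- a \emph{cell filtration} of $ \ogl $ (distinct from the stratification by the $ \Gr^\mu $). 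The heart of the ``in particular'' clause is then a dimension estimate: I would check that $ A_\mu $ is equidimensional of complex dimension $ \het(\lambda-\mu) $, so that its irreducible components are exactly the MV cycles of weight $ \mu $, each of which, having the appropriate dimension, determines a class in $ IH_{2\het(\lambda-\mu)}(\ogl) $ and forms part of a basis of $ H_\tp(A_\mu) $.

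Second, I would feed this filtration into intersection homology: the long exact sequences of the pairs $ (\ogl_{\preceq\mu}, \ogl_{\prec\mu}) $ assemble into a filtration on $ IH_*(\ogl) $ whose successive quotients receive the contributions of the individual cells $ A_\mu $. The claim becomes that this filtration splits and that the $ \mu $-piece is a single copy of $ H_\tp(A_\mu) $ placed in degree $ 2\het(\lambda-\mu) $ and spanned by the fundamental classes of the components of $ A_\mu $, i.e.\ by the weight-$ \mu $ MV cycles. This is where the genuine input enters. The degeneration follows from the fact that $ \ogl $ is an affine Schubert variety, so (by Kazhdan-Lusztig) its $ IC $-complex is pointwise pure with stalks and costalks concentrated in even degrees, combined with the observation that along the contracting direction of each cell $ A_\mu $ the restriction of $ IC_\lambda $ is controlled by its (even, pure) stalk at $ L_\mu $ -- equivalently, one invokes Braden's hyperbolic localization theorem. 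A parity count then also pins down the single degree, $ 2\het(\lambda-\mu) $, in which the weight-$ \mu $ MV cycles contribute.

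The step I expect to be the main obstacle is precisely this degeneration: I know of no elementary, self-contained proof that the cell filtration splits in intersection homology, and it really does rely on purity of $ IH $ of Schubert varieties, or on hyperbolic localization, so I would cite \cite{MV} for it rather than attempt it from scratch. By contrast, identifying the $ \Cx $-fixed points, constructing the cell filtration, and the dimension count matching components of $ A_\mu $ with MV cycles are all comparatively routine.
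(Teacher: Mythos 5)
The paper itself gives no proof of Theorem \ref{J:th11}: it is stated as a black box imported from the geometric Satake literature (\cite{MV}, \cite{Gsat}), so there is no argument of the paper to compare yours against line by line. Your outline is, in substance, the standard Mirkovic--Vilonen argument, and it is correct in outline: a regular one-parameter subgroup of $T$ (your $s \mapsto \mathrm{diag}(s^{n-1},\dots,s,1)$ is regular, so it has the same fixed points $L_\mu$ as $T$) gives the decomposition of $\ogl$ into attracting sets $A_\mu$, these induce a filtration of $IH_*(\ogl)$, and the filtration splits with the $\mu$-graded piece concentrated in a single degree because of parity/purity of the IC sheaf on the affine Grassmannian -- equivalently, by hyperbolic localization. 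You are right that this degeneration is the genuinely deep input and right to cite \cite{MV} for it rather than attempt it.

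Two calibration points on what you call ``comparatively routine.'' First, the equidimensionality statement $\dim_{\C} A_\mu = \het(\lambda-\mu)$ (and more precisely $\dim\bigl(A_\mu \cap \Gr^\nu\bigr) = \het(\nu-\mu)$ for each stratum $\Gr^\nu \subset \ogl$) is itself one of the main theorems of \cite{MV}; it is not something you can expect to check by inspection, since the $A_\mu$ are intersections of $\ogl$ with infinite-dimensional semi-infinite orbits. Second, the step ``each component of $A_\mu$, having the appropriate dimension, determines a class in $IH_{2\het(\lambda-\mu)}(\ogl)$'' is not automatic for an arbitrary subvariety of the right dimension: one must verify the allowability conditions of intersection homology against the stratification $\ogl = \bigcup_{\nu \le \lambda}\Gr^\nu$, which is exactly where the refined dimension estimate above enters (the intersections with the strata have precisely the boundary-allowable dimensions). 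With those two points either proved or explicitly delegated to \cite{MV} alongside the degeneration step, your sketch is a faithful reconstruction of how the theorem is actually established.
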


This theorem allows us to formally define a grading on $ IH_*(\ogl) $ by the weight lattice of $ GL_n $.  Namely, we set 
\begin{equation*}
IH_\mu(\ogl) = \text{ span of MV cycles of weight $ \mu $}.
\end{equation*}

\begin{Example} \label{J:ex30}
Let us examine this theorem in the example discussed in \ref{J:ex26'}.  So $ \lambda = \omega_k $ and $ \Gr^\lambda = \Gr(k, \C^n)$.  The $ T $-fixed points in $ \Gr(k, \C^n) $ are naturally labelled by subsets $ S $ of $ \{1, \dots, n \} $ of size $ k$.  These subsets give us coordinate subspaces $ W_S \subset \C^n $ (namely $ W_S $ is the span of $ \{e_i : i \in S \} $).  For such a subset $ S $, the attracting set
\begin{equation*}
\{ W \in \Gr(k, \C^n) : \lim_{s \rightarrow 0} s \cdot W = W_S \}
\end{equation*}
is called a \textbf{Schubert cell}.  

These Schubert cells (which are irreducible and isomorphic to an affine space) give a cell decomposition of $\Gr(k, \C^n) $.  Thus there is only one MV cycle for each $ \mu $ and it is the closure of this Schubert cell (usually called a Schubert variety).  Thus in this case $ IH_*(\ogl)= H_*(Gr^\lambda) $ has a basis given by MV cycles which are labelled by $k$-element subsets of $ \{1, \dots, n\} $. 
\end{Example}

\subsection{The representation} \label{J:se22}
To define the action of $ \gl_n $ on $ IH_*(\ogl) $ we need one more geometric ingredient.  Let $ Y $ be a projective variety with a very ample line bundle $ \lc $.  The first Chern class of $ \lc$ gives a map
\begin{equation*}
c(\lc) : IH_k(Y) \rightarrow IH_{k-2}(Y)
\end{equation*}
One may think of this map as taking a $ k$-chain and intersecting with the zero set of a generic section of $ \lc$, or equivalently, intersecting with a hyperplane inside the projective embedding coming from $ \lc$.

For our variety $ \ogl$, we define a line bundle $ \lc $ whose fibre at the point $L $ is given by $ \lc_L := \det(L)^* $, where $ \det $ of a vector space denotes the top exterior power.

This $ \lc $ gives us a map $ c(\lc) : IH_k(\ogl) \rightarrow IH_{k-2}(\ogl) $.  Using our grading by the weight lattice, we may split this $ c(\lc) $ into matrix coefficients $ c(\lc)_{\mu \nu} : IH_\mu(\ogl) \rightarrow IH_\nu(\ogl) $, where $ht(\lambda - \nu) = ht(\lambda - \mu) - 1$.  A refined analysis shows that this matrix coefficient vanishes unless $ \nu = \mu + \alpha_i $ for some $ i = 1, \dots, n-1 $.  Thus, we can split $ c(\lc) $ as a direct sum $ n-1 $ of operators $ E_i $, with $$ E_i : IH_\mu(\ogl) \rightarrow IH_{\mu + \alpha_i}(\ogl) .$$

\begin{Theorem} \label{J:th12}
The vector space $ IH_*(\ogl) $ with its weight space decomposition and $ E_i $ defined as above is a representation of $ \gl_n $.  In particular it is the irreducible representation of highest weight $ \lambda $.
\end{Theorem}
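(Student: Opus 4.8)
The plan is to prove the three facts the statement bundles together: that the $E_i$, together with lowering operators $F_i$ produced below, obey the relations of $\gl_n$; that the resulting module has $\lambda$ as a highest weight with one‑dimensional highest weight space; and that it is irreducible. (The full geometric Satake equivalence would deliver all of this at once, the $\gl_n$‑action being built into the Tannakian reconstruction of the dual group, but that reintroduces perverse sheaves, so I argue directly.)

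For the lowering operators I would use Hard Lefschetz for the intersection homology of the projective variety $\ogl$. Since $\lc=\det^*$ is ample, $c(\lc)$ is the raising operator $e$ of a canonical $\mathfrak{sl}_2$‑triple on $IH_*(\ogl)$ whose semisimple element $h$ is the degree operator. One first checks, using Theorem \ref{J:th11}, that with respect to the weight grading $e$ is exactly $\sum_i E_i$ and that $h$ acts on $IH_\mu(\ogl)$ by $\dim_\C\ogl-2\het(\lambda-\mu)$, which one recognizes as the eigenvalue of the principal semisimple element $\mathrm{diag}(n-1,n-3,\dots,1-n)\in\gl_n$ on $V(\lambda)_\mu$; in particular $e$ shifts weight by a simple root, and $(e,h)$ is the principal $\mathfrak{sl}_2$. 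Completing to a triple determines $f$, and I would define $F_i$ as the weight‑shift‑$(-\alpha_i)$ component of $f$, suitably rescaled, so that $F_i\colon IH_\mu(\ogl)\to IH_{\mu-\alpha_i}(\ogl)$. Among the $\gl_n$ relations, the ones needing attention are $[E_i,F_j]=0$ for $i\ne j$ and the statement that (with the normalization fixed) $E_iF_i-F_iE_i$ acts on $IH_\mu(\ogl)$ by $\langle\mu,\alpha_i\rangle$: the relations saying $E_j,F_j$ shift weight by $\pm\alpha_j$ are built in, and once the $\mathfrak{sl}_2$‑data is in place the Serre relations are automatic, since on the finite‑dimensional space $IH_*(\ogl)$ the $E_i$ and $F_i$ are nilpotent, the module is integrable, and the Serre elements act by zero. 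The relations $[E_i,F_j]=0$ for $i\ne j$ should come out by bookkeeping: the weight‑shift $\alpha_i-\alpha_j$ of such a term is attained by no other pair of indices, so linear independence of the simple roots isolates it inside the Hard Lefschetz identity and forces it to vanish.

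The crux is the Cartan relation $E_iF_i-F_iE_i=\langle\mu,\alpha_i\rangle$ on $IH_\mu(\ogl)$. Global Hard Lefschetz $[e,f]=h$ only gives that the sum $\sum_i(E_iF_i-F_iE_i)$ has the principal‑semisimple eigenvalue on $IH_\mu(\ogl)$ — the correct total, but not the individual summands. To separate them I would pass to a local computation: either equivariant localization at the torus‑fixed points $L_\mu$, tracking how the $T$‑weight of the fibre $\lc_{L_\mu}=\det(L_\mu)^*$ varies as $\mu$ is moved by $\alpha_i$, which produces precisely $\mu_i-\mu_{i+1}=\langle\mu,\alpha_i\rangle$; or restriction of all the data, in the $i$‑th direction, to a $\overline{\Gr^{\omega_1}}$‑type subvariety of $\ogl$, on which the $i$‑th $\mathfrak{sl}_2$ already appears. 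I expect this to be the main obstacle: separating a single Hard Lefschetz $\mathfrak{sl}_2$ into the $n-1$ Chevalley copies is exactly the geometric content that the Satake equivalence organizes for free, and carrying it out by hand amounts to re‑deriving a piece of that equivalence (convolution with minuscule affine Grassmannians).

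Granting that $IH_*(\ogl)$ is a finite‑dimensional $\gl_n$‑module (the central element acts by the scalar $N$, since every weight $\mu$ that occurs has $\mu_1+\dots+\mu_n=N$), the rest is formal. The variety $\ogl$ is irreducible, so $IH_0(\ogl)=\C$, and by Theorem \ref{J:th11} this is the weight space $IH_\lambda(\ogl)$, spanned by the unique MV cycle of weight $\lambda$, namely the point $L_\lambda$. Every weight $\mu$ occurring has $\mu\le\lambda$: its fixed point $L_\mu$ lies in $\ogl=\bigcup_{\nu\le\lambda}\Gr^\nu$, so the dominant rearrangement of $\mu$, and hence $\mu$ itself, is $\le\lambda$. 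Thus $\lambda$ is maximal, $E_i(IH_\lambda(\ogl))\subseteq IH_{\lambda+\alpha_i}(\ogl)=0$, and $IH_\lambda(\ogl)$ is a highest weight space of weight $\lambda$; the $\gl_n$‑submodule it generates is therefore isomorphic to $V(\lambda)$, a finite‑dimensional module generated by a highest weight vector being irreducible by complete reducibility (Theorem \ref{J:th3}). Finally, $\dim IH_*(\ogl)=\dim V(\lambda)$: the number of MV cycles of weight $\mu$ equals $\dim V(\lambda)_\mu$ — equivalently the $IH$‑Poincar\'e polynomial of $\ogl$ is a Kostka--Foulkes polynomial, a theorem of Lusztig, whose value at $q=1$ is the weight multiplicity. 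Hence the inclusion $V(\lambda)\hookrightarrow IH_*(\ogl)$ is an equality, and $IH_*(\ogl)\cong V(\lambda)$ with the stated action of the $E_i$.
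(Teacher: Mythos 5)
Your outer argument is fine \emph{once a $\gl_n$-module structure is granted}: the unique MV cycle of weight $\lambda$ (the point $L_\lambda$) gives a one-dimensional top weight space, all weights occurring are $\le\lambda$, and a total-dimension count (via Lusztig's identification of the $IH$-Poincar\'e polynomial of $\ogl$ with a Kostka--Foulkes polynomial, which keeps you clear of the circularity of quoting the MV-cycle-count corollary that the paper deduces \emph{from} Theorems \ref{J:th11} and \ref{J:th12}) upgrades the copy of $V(\lambda)$ generated by the highest weight vector to all of $IH_*(\ogl)$. But the heart of the theorem is exactly the part you leave open: that the geometrically defined $E_i$, together with some $F_i$, satisfy the $\gl_n$ relations. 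Hard Lefschetz hands you only the principal $\mathfrak{sl}_2$, namely the triple $\bigl(c(\lc), h, c(\lc)^*\bigr)$ with $e=\sum_i E_i$ and a single $f$. Two things are then unproven. First, that $f$ has weight components only of the form $-\alpha_j$ (the analogue, for $c(\lc)^*$, of the ``refined analysis'' the paper invokes for $c(\lc)$); without this, your bookkeeping argument for $[E_i,F_j]=0$, $i\ne j$, fails, since the $(\alpha_i-\alpha_j)$-component of $[e,f]$ can receive contributions from exotic components of $f$, and only their sum is forced to vanish. Second --- and you flag it yourself as the main obstacle --- the individual Cartan relations $E_iF_i-F_iE_i=\langle\mu,\alpha_i\rangle$ on $IH_\mu(\ogl)$: the identity $[e,f]=h$ controls only one weighted sum over $i$ per weight space (indeed $h$ acts by the principal eigenvalue $\dim_\C\ogl-2\het(\lambda-\mu)$, not by $\sum_i\langle\mu,\alpha_i\rangle$, so the ``suitable rescaling'' you defer is already doing essential work). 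Your two proposed repairs are programmatic sketches rather than arguments: the non-equivariant action of $c(\lc)$ on the MV basis is not read off from the $T$-weights of the fibres $\lc_{L_\mu}$ without substantial extra input (equivariant intersection cohomology, localization of MV classes), and restricting data to $\overline{\Gr^{\omega_1}}$-type subvarieties does not by itself transport operators on $IH_*(\ogl)$.

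So the proposal has a genuine gap at its central step, as you yourself acknowledge when you say that carrying it out ``amounts to re-deriving a piece of the Satake equivalence.'' For comparison, the paper does not prove Theorem \ref{J:th12} directly either: it attributes the result to Lusztig, Ginzburg and Mirkovi\'c--Vilonen, and in Section \ref{J:se24} recovers $IH_*(\ogl)\cong V(\lambda)$ from the geometric Satake equivalence (Theorem \ref{J:th14}) via compatibility with the fibre functor. Your Hard Lefschetz framework is a reasonable skeleton for a direct proof, but as written it establishes only the principal $\mathfrak{sl}_2$-action plus the (correct) highest-weight and dimension bookkeeping, not the $\gl_n$-action itself.
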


One might wonder where the $ F_i $ come from.  A deep result in the topology of algebraic varieties, called the Hard Lefschetz theorem, states that for any projective variety $ Y $, there exists an operator $ c(\lc)^* $ such that the pair $c(\lc), c(\lc)^* $ generate an action of $ \mathfrak{sl}_2 $  on $ IH_*(Y) $ compatible with the grading.  To get our $ F_i $, we then split $c(\lc)^* $ into $ F_i $, just as we split our $ c(\lc) $ into $ E_i $ (except we actually have to divide by certain coeffiecients when defining the $ F_i $).

As a corollary of Theorems \ref{J:th11} and \ref{J:th12}, we find that the number of MV cycles for $ \ogl $ of weight $ \mu $ equals the dimension of the $ \mu $ weight space in $ V(\lambda)$.

\begin{Example} \label{J:ex31}
Continuing from Example \ref{J:ex30}, where we saw that $ IH_*(Gr^{\omega_k}) $ has a basis labelled by $ k $ element subset of $ \{1, \dots, n\} $.  Also $V({\omega_k}) = \Lambda^k \C^n $ has a basis labelled by $ k $ elements subsets of $\{1, \dots, n \} $, so this isomorphism is clear in that case.
\end{Example}

\begin{Exercise} \label{J:ex32}
Consider $ \overline{\Gr^{(2,1,0)}} $.  Show that there are 2 MV cycles of weight $ (1,1,1) $.
\end{Exercise}

\section{Geometric skew Howe duality} \label{J:se23}
We will discuss a ``duality'' relation between the Ginzburg and geometric Satake constructions.  In particular, this will allow us to deduce the Ginzburg construction from the geometric Satake correspondence, following ideas of Braverman, Gaitsgory, Mirkovic, and Vybornov (see \cite{BG,BGV,MVy}).  

We will begin by giving a more refined statement of the geometric Satake correspondence.

\subsection{Geometric Satake} \label{J:se24}
Let $ Y = \cup Y_\alpha  $ be a stratified space.  Here each $ Y_\alpha $ is a locally closed submanifold of $ Y $.  Also we require that the closure of any $ Y_\alpha $ is the union of various $ Y_\beta $.  To simplify the following discussion, let us assume that all of the $ Y_\alpha $ are simply-connected.

In this case, there is a category $ \perv(Y) $ of perverse sheaves on $ Y $ which are constructible with respect to this stratification.  Perverse sheaves are a subset of the set of all complexes of constructible sheaves on $ Y $ which obey certain vanishing conditions with respect to their stalks and costalks.  For more information on the definition and basic properties of perverse sheaves, see \cite{Gbook}, section 8.4.

We will need the following properties.
\begin{Proposition} \label{J:th13}
\begin{itemize}
\item The category $\perv(Y) $ is an abelian category.
\item The irreducible objects of $ \perv(Y) $ are the IC sheaves $ IC(\overline{Y_\alpha})$ of the strata.  These can be characterized as the ``simplest'' perverse sheaves which are supported on $\overline{Y_\alpha} $ and whose restriction to $ Y_\alpha $ is the (shifted) constant sheaf.
\item The push-forward of a perverse sheaf under a semi-small map is perverse (see \cite{Gbook}, section 8.9).
\end{itemize}
\end{Proposition}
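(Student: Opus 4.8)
The plan is to realize $\perv(Y)$ as the heart of a $t$-structure on the bounded derived category $D^b_c(Y)$ of complexes constructible with respect to the given stratification, and then to invoke the general machinery of Beilinson--Bernstein--Deligne (BBD). For item (i), I would recall that $\perv(Y)$ consists of those $\mathcal{F}$ satisfying the support condition $\dim_\C \mathrm{supp}\,\mathcal{H}^j(\mathcal{F}) \le -j$ together with the cosupport condition obtained by applying Verdier duality, both measured stratum by stratum against the $Y_\alpha$. To verify that the resulting pair $({}^pD^{\le 0}, {}^pD^{\ge 0})$ is a $t$-structure, I would induct on the number of strata: for the open stratum $j : U \hookrightarrow Y$ one uses the shifted standard $t$-structure on $D^b_c(U)$ (a complex is perverse there iff it is a local system placed in degree $-\dim_\C U$), for the closed complement $i : Z \hookrightarrow Y$ one uses the $t$-structure known by induction, and one glues them via the recollement formed by the six functors $j_!, j^* = j^!, j_*, i^*, i_* = i_!, i^!$, using the BBD gluing lemma. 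Once $\perv(Y)$ is exhibited as the heart of a $t$-structure, it is automatically abelian by the general fact that the heart of any $t$-structure is abelian.

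For item (ii), I would use the theory of intermediate extension inside this inductive framework. On a single smooth, simply-connected stratum $U$ of complex dimension $d$, a perverse sheaf constructible with respect to the trivial stratification is a local system placed in degree $-d$, and simple connectivity forces any irreducible such object to be $\C_U[d]$. For the inductive step, the recollement for $j : U \hookrightarrow Y$ (open stratum) and $i : Z \hookrightarrow Y$ (closed complement) shows that every simple object of $\perv(Y)$ is either the pushforward $i_*$ of a simple object of $\perv(Z)$, handled by induction, or the intermediate extension $j_{!*}(\C_U[d])$, i.e.\ the image of the natural perverse map ${}^p\!j_!(\C_U[d]) \to {}^p\!j_*(\C_U[d])$, which is the unique extension of $\C_U[d]$ across $Z$ with no subobject or quotient supported on $Z$. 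By definition this $j_{!*}(\C_U[d])$ is $IC(\overline{U})$, and running the induction over all strata yields exactly the list $\{ IC(\overline{Y_\alpha}) \}$, together with the stated characterization as the ``simplest'' perverse sheaf supported on $\overline{Y_\alpha}$ and restricting to a shifted constant sheaf on $Y_\alpha$.

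For item (iii), let $f : X \to Y$ be semismall with $X$ smooth of complex dimension $N$; the claim is that $f_*(\C_X[N])$ is perverse. Since $X$ is smooth, $\C_X[N]$ is Verdier self-dual, and since $f$ is proper, $f_*$ coincides with $f_!$ and hence commutes with Verdier duality; therefore $f_*(\C_X[N])$ is self-dual, and it suffices to check the support condition, the cosupport condition then following formally. The support condition is checked stratum by stratum using proper base change: at a point $y$ of a stratum $S \subseteq Y$, the stalk of $f_*(\C_X[N])$ in degree $j$ is $H^{j+N}(f^{-1}(y), \C)$, which vanishes unless $j + N \le 2 \dim_\C f^{-1}(y)$, i.e.\ unless $-j \ge N - 2\dim_\C f^{-1}(y)$; the semismallness inequality $\dim_\C S + 2\dim_\C f^{-1}(y) \le N$ then forces $-j \ge \dim_\C S$, which is precisely the support condition along $S$.

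The step I expect to be the main obstacle is item (i): verifying that one genuinely has a $t$-structure requires the BBD gluing lemma and the checking of its hypotheses (the adjunctions among $i_*, i^*, i^!, j_!, j^*, j_*$ and vanishings such as $i^* j_! = 0$), which is the technical heart of the theory, with everything else comparatively formal. In an expository account one would of course not reproduce these arguments but simply cite \cite{Gbook}, section 8.4, for items (i) and (ii), and section 8.9 for item (iii).
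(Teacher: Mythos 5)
The paper does not actually prove this Proposition: it is quoted as standard background, with the reader referred to \cite{Gbook} (section 8.4 for the first two items, section 8.9 for the third), so there is no in-text argument to compare against. Your sketch is the standard BBD route that those references follow — perverse sheaves as the heart of a glued $t$-structure (hence abelian), simple objects classified by intermediate extensions $j_{!*}$ of irreducible local systems on strata (which, by the paper's simple-connectivity assumption, are just shifted constant sheaves, giving the $IC(\overline{Y_\alpha})$), and a stalk-degree estimate for semismall pushforwards — so in substance you are reconstructing the proof the paper delegates to its citation, and you correctly note at the end that an exposition at this level would simply cite it.

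One genuine gap worth fixing: your argument for the third item only proves that $f_*(\C_X[N])$ is perverse for $X$ smooth, whereas the statement (and the paper's actual use of it) concerns the pushforward of an arbitrary perverse sheaf — in Theorem \ref{J:th15} and Corollary \ref{J:thgeomSatcor} the input is $IC(\overline{\Gr^{\lambda^1,\dots,\lambda^n}})$, and the convolution variety is in general singular, so its $IC$ sheaf is not a shifted constant sheaf and your special case does not cover it. The remedy is routine but should be said: take $f$ proper and semismall in the stratified sense (fibre-dimension bounds over each stratum of $Y$, relative to a stratification of $X$ with respect to which the input sheaf $\mathcal{F}$ is constructible), and bound the stalk cohomology of $f_*\mathcal{F}$ along a stratum $S$ by combining proper base change with the support condition already satisfied by $\mathcal{F}$, rather than with ordinary cohomological dimension of the fibre. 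Also, for a general $\mathcal{F}$ you lose the self-duality shortcut — $\mathcal{F}$ need not be Verdier self-dual — so the cosupport condition must be checked separately, e.g.\ by applying the same stalk estimate to the Verdier dual $\mathbb{D}\mathcal{F}$ (again perverse) and using that $f_*$ commutes with duality for proper $f$. With that adjustment your outline matches the result the paper is invoking.
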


Given a perverse sheaf $ \mathcal{F} $ on $ Y $, we may consider the cohomology of $ Y $ with coefficients in $ \mathcal{F}$.  In particular we have  $ H^*(Y, IC(\overline{Y_\alpha})) = IH(\overline{Y_\alpha})$.

Recall that $ \Gr^\ge = \cup_{\lambda \in \N^n_+} \Gr^\lambda$ is the space of all subspaces $ L \subset \C[z] \otimes \C^m $ such that $ XL \subset L$.  We will consider it as a stratified space with strata $ \Gr^\lambda $.  

Let $ \rep^\ge(GL_m) $ denote the category of polynomial representations of $ GL_m$.  Now we can state the geometric Satake correspondence.

\begin{Theorem} \label{J:th14}
There is an equivalence of categories
\begin{equation*}
\perv(\Gr^{\ge}) \cong \rep^\ge(GL_m)
\end{equation*}
This equivalence is compatible with the functors to the category of vector spaces (denoted $ Vect $) on both sides.  One functor is $ \mathcal{F} \mapsto H^*(\Gr^\ge, \mathcal{F})$ and the other functor is the forgetful functor.

Under this equivalence $ IC(\ogl) $ is taken to $ V(\lambda) $.
\end{Theorem}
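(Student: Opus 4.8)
The plan is to prove the equivalence by the Tannakian formalism, following Mirkovi\'c--Vilonen \cite{MV}, and then read off the correspondence $IC(\ogl)\leftrightarrow V(\lambda)$ from the weight grading of Section~\ref{J:se21}. There are four steps: (1) equip $\perv(\Gr^\ge)$ with a symmetric monoidal structure; (2) show that the global cohomology functor $H^* = H^*(\Gr^\ge,-)$ is a faithful, exact tensor functor to $Vect$; (3) invoke Tannakian reconstruction to get $\perv(\Gr^\ge)\cong\rep(G)$ for a pro-algebraic group $G$; and (4) identify $G$ with $GL_m$ and match the simple objects.

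For step (1): the affine Grassmannian carries a convolution product built from the correspondence of chains $L\subset L'$ (fibered over $\Gr^\ge$ by $L$) together with the ``addition'' map back to $\Gr^\ge$; one sets $\mathcal{F}_1\ast\mathcal{F}_2$ to be the twisted exterior product pushed forward along this map. Since the relevant convolution maps are semismall on strata, Proposition \ref{J:th13} guarantees that $\mathcal{F}_1\ast\mathcal{F}_2$ is again perverse. The commutativity constraint is the delicate point: the naive constraint must be corrected by a sign $(-1)^{\deg}$ --- most cleanly obtained by realizing $\ast$ as a \emph{fusion} product via the Beilinson--Drinfeld Grassmannian over a curve and degenerating two points to one --- and it is exactly this correction that makes the Tannakian group $GL_m$ rather than a twisted variant.

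For step (2): exactness of $H^*$ rests on a parity argument. Each stratum $\Gr^\lambda$ has a cell decomposition by semi-infinite (Schubert-type) cells --- already visible in Example \ref{J:ex30} --- so $IH^*(\ogl) = H^*(\ogl, IC(\ogl))$ is concentrated in even degrees; this parity vanishing forces the hypercohomology long exact sequences attached to short exact sequences of perverse sheaves to break up, hence $H^*$ is exact. Faithfulness is immediate since each simple $IC(\ogl)$ has nonzero cohomology. Compatibility of $H^*$ with $\ast$ follows from the fusion description of $\ast$ together with the K\"unneth theorem. With these properties, $(\perv(\Gr^\ge),\ast,H^*)$ is a neutral Tannakian category, which gives step (3).

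Step (4) is the $GL_m$-specific bookkeeping. The character lattice of the reconstructed $G$ is identified with the lattice of $T$-fixed points $\{L_\mu\}$ (Exercise \ref{J:ex29}), i.e.\ with $\Z^m$, and the grading of $IH_*(\ogl)$ by MV-cycle weight (Theorem \ref{J:th11}) is precisely the weight grading for a maximal torus of $G$; the root datum of $G$ is then pinned down by checking that the minuscule strata $\Gr^{\omega_k}$ yield the fundamental representations $\Lambda^k\C^m$ (Example \ref{J:ex31}), which forces $G\cong GL_m$. Finally, $IC(\ogl)$ is simple, and by Theorem \ref{J:th11} its weights are $\leq\lambda$ with $\lambda$ itself occurring, so it has highest weight $\lambda$ and must be $V(\lambda)$. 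I expect the main obstacle to be all of step (1) together with the tensor-compatibility part of step (2): constructing the monoidal structure, proving the convolution stays perverse, and above all getting the commutativity constraint right via the fusion product --- this is the genuine technical core of geometric Satake, whereas Tannakian reconstruction, the parity vanishing, and the combinatorial identification of the root datum are comparatively formal once the results already quoted are in hand.
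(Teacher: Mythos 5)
The paper does not prove this theorem: Theorem \ref{J:th14} is the geometric Satake correspondence itself, quoted with references to Lusztig, Ginzburg and Mirkovi\'c--Vilonen, so there is no in-paper argument to compare against. Your outline is a recognizable and broadly faithful summary of the Mirkovi\'c--Vilonen proof (convolution/fusion, semismallness giving perversity, exactness and faithfulness of global cohomology, Tannakian reconstruction, identification of the root datum via the minuscule strata), and you correctly locate the technical core in the construction of the tensor structure and the sign-corrected commutativity constraint.

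As a proof, however, several steps have genuine gaps beyond being schematic. First, Tannakian reconstruction as you invoke it requires a \emph{rigid} tensor category, but the category in the statement is $\perv(\Gr^\ge)$, the positive part, corresponding to \emph{polynomial} representations of $GL_m$; this category has no duals (the dual of a polynomial representation involves negative powers of the determinant), so step (3) does not apply as stated. One must either work on the full affine Grassmannian of lattices in $\C[z,z^{-1}]\otimes\C^m$ (equivalently all representations of $GL_m$) and then restrict, or replace the group-Tannakian argument by a monoid/bialgebra reconstruction. Second, to know the reconstructed group is (pro-)reductive before pinning down its root datum you need semisimplicity of the perverse category (no extensions between the $IC$ sheaves, via parity/purity of stalks); your step (4) silently assumes this. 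Third, your exactness and torus-identification arguments lean on Theorem \ref{J:th11} and Example \ref{J:ex30}, but the MV-cycle basis theorem is itself proved by Mirkovi\'c--Vilonen as part of this very package (via the weight functors given by semi-infinite orbits and hyperbolic localization, together with the dimension estimate for MV cycles), so a self-contained proof cannot simply cite it; you would need to introduce the weight functors and establish $H^*=\bigoplus_\mu H^*_{c}(S_\mu,-)$ with the dimension bounds, which is where the real work of step (2) and of ``the grading is a torus action'' lives. None of this makes your plan wrong --- it is the standard route --- but these are the points at which the sketch would have to be substantially filled in rather than formally completed.
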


Using the compatibility with the functors to $ Vect $, we get our previous statement that $ IH(\ogl) \cong V(\lambda) $.

The above equivalence is actually an equivalence of tensor categories with respect to a tensor structure on $ \perv(\Gr^\ge)$.  Instead of giving a full definition of this tensor structure, we will explain how it works for the IC sheaves.

Let  $\lambda^1, \dots, \lambda^n $ be a sequence of dominant weights of $ GL_m$.  Then we define the variety
\begin{equation*}
\begin{aligned}
\overline{\Gr^{\lambda^1, \dots, \lambda^n}} := \{ 0 = &L_0 \subset L_1 \subset \cdots \subset L_n \subset \C[z]\otimes \C^m : \\
 &XL_i \subset L_i \text{ and } X|_{L_i/L_{i-1}} \text{ has Jordan type } \le \lambda^i \}
 \end{aligned}
 \end{equation*}
  
 There is an obvious map $ m_{\lambda^1, \dots, \lambda^n} : \overline{\Gr^{\lambda^1, \dots, \lambda^n}}  \rightarrow \Gr $ taking $(L_0, \dots, L_n) $ to $ L_n$.  Actually its image is in $ \overline{\Gr^{\lambda^1 + \dots + \lambda^n}}$.
 
 \begin{Theorem} \label{J:th15}
 Under the geometric Satake equivalence, $m_*(IC(\overline{\Gr^{\lambda_1, \dots, \lambda_n}})) $ corresponds to $ V({\lambda^1}) \otimes \cdots \otimes V({\lambda^n})$.
 \end{Theorem}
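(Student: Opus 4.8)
The plan is to bootstrap Theorem~\ref{J:th15} from the single-weight statement in Theorem~\ref{J:th14} using the tensor structure on $\perv(\Gr^\ge)$ together with the key geometric fact that the convolution diagram realizes the convolution product. First I would recall the definition of the tensor (convolution) product $\mathcal{F}_1 * \cdots * \mathcal{F}_n$ on $\perv(\Gr^\ge)$: one forms the twisted product $\mathcal{F}_1 \widetilde{\boxtimes} \cdots \widetilde{\boxtimes} \mathcal{F}_n$ on the convolution space $\widetilde{\Gr}^{(n)}$ whose points are chains $0 = L_0 \subset L_1 \subset \cdots \subset L_n$ with $XL_i \subset L_i$, and then pushes forward along the map $m$ sending $(L_0, \dots, L_n) \mapsto L_n$. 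Thus, \emph{by definition}, $IC(\ogl[\lambda^1]) * \cdots * IC(\ogl[\lambda^n]) = m_*\bigl(IC(\ogl[\lambda^1]) \widetilde{\boxtimes} \cdots \widetilde{\boxtimes} IC(\ogl[\lambda^n])\bigr)$, and under the equivalence of Theorem~\ref{J:th14} (which is an equivalence of tensor categories) the left side corresponds to $V(\lambda^1) \otimes \cdots \otimes V(\lambda^n)$.

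So the real content to verify is the identification of the twisted external product of the IC sheaves on $\widetilde{\Gr}^{(n)}$ with the IC sheaf of $\overline{\Gr^{\lambda^1, \dots, \lambda^n}}$. The second step is therefore to show that $\overline{\Gr^{\lambda^1, \dots, \lambda^n}}$ is exactly the relevant component of the convolution space $\widetilde{\Gr}^{(n)}$ — this is immediate from unwinding the defining conditions $X|_{L_i/L_{i-1}}$ has Jordan type $\le \lambda^i$, which says precisely that the $i$-th ``relative position'' $L_i/L_{i-1}$ lands in $\overline{\Gr^{\lambda^i}}$ inside the ambient quotient space. The third step is to argue that $IC(\ogl[\lambda^1]) \widetilde{\boxtimes} \cdots \widetilde{\boxtimes} IC(\ogl[\lambda^n]) \cong IC(\overline{\Gr^{\lambda^1, \dots, \lambda^n}})$: this follows because the convolution space is a (Zariski-locally trivial, stratum by stratum) iterated fibration with fibers the $\ogl[\lambda^i]$, so the twisted product of the intersection-cohomology complexes is again an intersection-cohomology complex on the total space (IC sheaves pull back to IC sheaves, up to shift, under smooth maps and are preserved under such twisted products). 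One then invokes the decomposition theorem (or here the semismallness statement in Proposition~\ref{J:th13}(iii), since $m_{\lambda^1, \dots, \lambda^n}$ is semismall) to conclude that $m_*IC(\overline{\Gr^{\lambda^1,\dots,\lambda^n}})$ is perverse, matching $m_*$ of the twisted product.

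I expect the main obstacle to be rigorously setting up the convolution space $\widetilde{\Gr}^{(n)}$ and the twisted external product $\widetilde{\boxtimes}$ and checking that $IC \,\widetilde{\boxtimes}\, \cdots \,\widetilde{\boxtimes}\, IC = IC$ on it — i.e.\ that the fibration structure is genuinely locally trivial in a way that lets the IC property propagate. In the exposition style of these notes one would state this as a lemma and cite \cite{MV} (or \cite{Gbook}, section 8.9) rather than prove it from scratch, since a self-contained proof requires the full machinery of perverse sheaves and the convolution formalism that the notes have deliberately avoided. Once that lemma is granted, the chain of identifications
\begin{equation*}
m_*IC(\overline{\Gr^{\lambda^1,\dots,\lambda^n}}) \;=\; m_*\bigl(IC(\ogl[\lambda^1]) \,\widetilde{\boxtimes}\, \cdots \,\widetilde{\boxtimes}\, IC(\ogl[\lambda^n])\bigr) \;=\; IC(\ogl[\lambda^1]) * \cdots * IC(\ogl[\lambda^n]) \;\longleftrightarrow\; V(\lambda^1) \otimes \cdots \otimes V(\lambda^n)
\end{equation*}
is just Theorem~\ref{J:th14} applied to each factor together with the tensor-compatibility of the equivalence, and the proof is complete.
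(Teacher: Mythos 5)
Your outline is correct, but note that the paper itself gives no proof of this statement: Theorem \ref{J:th15} is quoted from the geometric Satake literature \cite{L,Gsat,MV}, and the notes deliberately decline to define the tensor structure on $\perv(\Gr^\ge)$ (``Instead of giving a full definition of this tensor structure, we will explain how it works for the IC sheaves''), so within the paper the theorem essentially \emph{is} the description of that structure on IC sheaves. What you propose is the standard argument from \cite{MV}: the convolution product is by definition $m_*$ of the twisted external product on the convolution space; $\overline{\Gr^{\lambda^1, \dots, \lambda^n}}$ is exactly the relevant closed piece of that space; the twisted product of the $IC(\overline{\Gr^{\lambda^i}})$ equals $IC(\overline{\Gr^{\lambda^1, \dots, \lambda^n}})$ because the convolution space is an iterated locally trivial fibration with fibres $\overline{\Gr^{\lambda^i}}$ and IC complexes are preserved under smooth pullback; stratified semismallness of $m_{\lambda^1,\dots,\lambda^n}$ gives perversity of the pushforward (the same fact the paper invokes right after Corollary \ref{J:thgeomSatcor}); and monoidality of the equivalence of Theorem \ref{J:th14}, together with $IC(\ogl) \leftrightarrow V(\lambda)$, converts convolution into tensor product. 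Citing rather than proving the two nontrivial inputs (the IC-of-twisted-product lemma and semismallness) is appropriate at the level of these notes; just be aware that your ``by definition'' step refers to a definition the notes never give, and that the monoidality of the Satake equivalence is itself a substantial imported theorem rather than a formal consequence of Theorem \ref{J:th14} as stated.
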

 
 \begin{Corollary} \label{J:thgeomSatcor}
 Let $ \mu, \lambda^1, \dots, \lambda^n \in \N^m_+$.  Then
 \begin{equation*}
 \begin{aligned}
 \Hom_{GL_m}\big( V(\mu),  V({\lambda^1}) \otimes \cdots \otimes V({\lambda^n})) &\cong \Hom_{\perv(\Gr)}(IC(\overline{\Gr^\mu} \big), m_*(IC(\overline{\Gr^{\lambda_1, \dots, \lambda_n}})) \\
 &\cong H_\tp(m_{\lambda^1, \dots, \lambda^n}^{-1}(L_\mu)).
 \end{aligned}
 \end{equation*}
 \end{Corollary}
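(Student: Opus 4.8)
The plan is to prove the two displayed isomorphisms separately. The first is a formal consequence of the geometric Satake equivalence, and the second a geometric computation of a multiplicity space; throughout I write $\overline{\Gr^{\vec\lambda}}$ for $\overline{\Gr^{\lambda^1,\dots,\lambda^n}}$ and $m$ for $m_{\lambda^1,\dots,\lambda^n}$. For the first isomorphism, I would simply transport the Hom-space through the equivalence of Theorem~\ref{J:th14}. That equivalence sends $IC(\overline{\Gr^\mu})$ to $V(\mu)$, and by Theorem~\ref{J:th15} it sends $m_*(IC(\overline{\Gr^{\vec\lambda}}))$ to $V(\lambda^1)\otimes\cdots\otimes V(\lambda^n)$ --- in particular $m_*(IC(\overline{\Gr^{\vec\lambda}}))$ really does lie in $\perv(\Gr^\ge)$, which is precisely the content of the semi-smallness discussed below. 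Since an equivalence of abelian categories is fully faithful, it induces isomorphisms on all Hom-spaces, and the first $\cong$ follows at once.

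For the second isomorphism I would first reduce to counting a multiplicity. The category $\perv(\Gr^\ge)$ is semisimple: by Theorem~\ref{J:th14} it is equivalent to $\rep^\ge(GL_m)$, which is semisimple because a subrepresentation of a polynomial representation is again polynomial (Theorem~\ref{J:th3}). Its simple objects are the $IC(\overline{\Gr^\mu})$ (Proposition~\ref{J:th13}), with $\End(IC(\overline{\Gr^\mu})) = \C$ and no morphisms between non-isomorphic simples. So if we decompose $m_*(IC(\overline{\Gr^{\vec\lambda}})) \cong \bigoplus_\mu IC(\overline{\Gr^\mu})^{\oplus d_\mu}$, then $\dim\Hom_{\perv(\Gr)}(IC(\overline{\Gr^\mu}),\, m_*(IC(\overline{\Gr^{\vec\lambda}}))) = d_\mu$, and the second $\cong$ amounts to identifying the multiplicity $d_\mu$ with $\dim H_\tp(m^{-1}(L_\mu))$.

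To compute $d_\mu$ geometrically I would use that $m$ is proper --- its source is a closed subvariety of a product of finite-dimensional Grassmannians --- and, crucially, that $m$ is \emph{semi-small}. The latter I would deduce from the fiber-dimension estimate $2\dim m^{-1}(L_\mu) \le \dim\overline{\Gr^{\vec\lambda}} - \dim\Gr^\mu$ over each stratum $\Gr^\mu$, with equality precisely on the strata that contribute. Granting semi-smallness, $m_*(IC(\overline{\Gr^{\vec\lambda}}))$ is then perverse by the semi-small pushforward statement in Proposition~\ref{J:th13}, and the decomposition theorem for semi-small maps identifies $d_\mu$ with the number of irreducible components of $m^{-1}(L_\mu)$ of the maximal (``half'') dimension; by the conventions of Section~\ref{J:se16}, for such an equidimensional fiber this count is exactly $\dim H_\tp(m^{-1}(L_\mu))$, giving the second isomorphism. (Alternatively, one computes the stalk of $m_*(IC(\overline{\Gr^{\vec\lambda}}))$ at $L_\mu$ by proper base change, $(m_*IC(\overline{\Gr^{\vec\lambda}}))_{L_\mu} \cong H^*(m^{-1}(L_\mu),\, IC(\overline{\Gr^{\vec\lambda}})|_{m^{-1}(L_\mu)})$, and extracts $d_\mu$ using the support and cosupport conditions characterizing the IC summands.)

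I expect the main obstacle to be the semi-smallness of $m$, equivalently the fiber-dimension estimate: this is simultaneously what makes the pushforward perverse (so that Theorems~\ref{J:th14} and~\ref{J:th15} even apply) and what lets $d_\mu$ be read off from the Borel--Moore homology of a single fiber, and it is the one place where the combinatorics of dominant weights of $GL_m$ has to be used. A secondary subtlety is the degree bookkeeping: ``$H_\tp$ of the fiber'' literally computes $d_\mu$ only on the relevant strata, where $m^{-1}(L_\mu)$ is equidimensional of exactly the expected dimension, so in general one should read $H_\tp$ as Borel--Moore homology in the fixed degree determined by $\mu$, which then vanishes automatically for the non-contributing weights. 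The remaining points --- shift conventions for IC sheaves, and which factor of the convolution carries the twisted product --- are routine once semi-smallness is in hand.
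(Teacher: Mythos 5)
Your proposal is correct and follows essentially the same route as the paper: the first isomorphism by full faithfulness of the geometric Satake equivalence together with Theorem~\ref{J:th15}, and the second by semi-smallness of $m_{\lambda^1,\dots,\lambda^n}$, which the paper handles by taking the stalk of $m_*(IC(\overline{\Gr^{\lambda^1,\dots,\lambda^n}}))$ at $L_\mu$ and citing strict semismallness --- exactly the alternative you give in parentheses. Your main variant via the decomposition theorem (multiplicities as counts of top-dimensional fiber components) is the standard equivalent formulation, and your remarks on semisimplicity and degree bookkeeping correctly identify the points the paper leaves implicit.
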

 
 For the second isomorphism, we take the stalk of $ m_*(IC(\overline{\Gr^{\lambda_1, \dots, \lambda_n}})) $ at the point $ L_\mu $ and use the fact that $ m_{\lambda^1, \dots, \lambda^n} $ is strictly semismall (for a general statement of this type, see \cite[Theorem 5.4]{Gnotes}).
 
 \subsection{Skew-Howe duality} \label{J:se25}
 Fix $ n, m $.  Consider the vector space $ \C^m \otimes \C^n$.  This vector space has commuting actions of $ GL_n, GL_m$.  
 
 Let $ V $ be a vector space with actions of two reductive groups $ G, H$.  These actions are called \textbf{Howe dual} if the actions commute and the image of $ G $ generates the $ \Hom_H(V, V) $ (ie $ U\mathfrak{g} \rightarrow \Hom_H(V,V) $ is onto) and vice versa.

\begin{Proposition} \label{J:th16}
Under this situation, $ V \cong \oplus_{i=1}^k U_i \otimes W_i $ where $ U_i $ is an irreducible representation of $ G $, $ W_i $ is an irreducible representation of $ H $.  In particular $\Hom_G(W_i, V) \cong U_i $ as $H $-representations.  
\end{Proposition}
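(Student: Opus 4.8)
The plan is to reduce the statement to the double-centralizer theorem applied to the images of the two group algebras inside $\End(V)$. First I would set $A \subseteq \End(V)$ to be the subalgebra generated by the image of $G$, and $B \subseteq \End(V)$ the subalgebra generated by the image of $H$. The hypothesis that the action of $G$ commutes with that of $H$ says exactly $A \subseteq \End_B(V)$ and $B \subseteq \End_A(V)$, while the Howe-duality hypothesis (the image of $G$ generates $\Hom_H(V,V)$, and vice versa) upgrades these to equalities $A = \End_B(V)$ and $B = \End_A(V)$; that is, $A$ and $B$ are mutual commutants in $\End(V)$. Since $G$ and $H$ are reductive and we work over $\C$, the representation $V$ is semisimple for each of them (this is Theorem \ref{J:th3} and its analogue for a general reductive group), so $A$ and $B$ are semisimple finite-dimensional $\C$-algebras.

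Next I would invoke the structure theory of the semisimple algebra $A$: it is a product $A \cong \prod_{i=1}^k \End(U_i)$, where $U_1, \dots, U_k$ are the pairwise non-isomorphic irreducible $G$-representations occurring in $V$, and correspondingly $V \cong \bigoplus_{i=1}^k U_i \otimes M_i$ as an $A$-module, where $M_i := \Hom_G(U_i, V)$. Because the $G$- and $H$-actions commute, each multiplicity space $M_i$ is naturally an $H$-module, and under this identification $\End_A(V) \cong \prod_{i=1}^k \End(M_i)$. Feeding in the Howe hypothesis once more, $B = \End_A(V) \cong \prod_{i=1}^k \End(M_i)$; since $B$ is the image of the reductive group $H$, its decomposition into simple factors must match the decomposition indexed by the distinct irreducible $B$-modules occurring in $V$. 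Hence each $M_i$ is an irreducible $H$-module, which I name $W_i$, and $W_i \not\cong W_j$ for $i \ne j$. Substituting back gives $V \cong \bigoplus_{i=1}^k U_i \otimes W_i$ with the $U_i$ distinct irreducible $G$-modules and the $W_i$ distinct irreducible $H$-modules, and by construction $\Hom_G(U_i, V) = M_i = W_i$ as $H$-representations, as desired.

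The step I expect to be the main obstacle is the first one: making precise that the stated Howe-duality condition really does turn $A$ and $B$ into honest mutual commutants of semisimple algebras, so that the purely ring-theoretic double-centralizer argument above applies verbatim. This rests on the facts that the image of the group algebra of a reductive group in a finite-dimensional complex representation is a semisimple algebra and equals its own double commutant --- consequences of complete reducibility --- which I would state explicitly at the outset, after which the remainder of the proof is formal.
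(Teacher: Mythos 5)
Your argument is correct. Note that the paper itself gives no proof of this proposition --- it is left to the reader as Exercise \ref{J:ex33} --- so there is nothing to compare against except the intended standard argument, which is exactly the double-centralizer argument you give: complete reducibility for the reductive groups makes the algebra $A$ spanned by the image of $G$ (equivalently, the image of $U\mathfrak{g}$) semisimple, the Howe condition identifies $B$ with $\End_A(V)\cong \prod_i \End(M_i)$ where $V\cong\bigoplus_i U_i\otimes M_i$ is the $G$-isotypic decomposition with $M_i=\Hom_G(U_i,V)$, and surjectivity of $B$ onto each factor $\End(M_i)$ forces the multiplicity spaces to be irreducible and pairwise non-isomorphic as $H$-modules. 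One small remark: the paper's displayed conclusion reads $\Hom_G(W_i,V)\cong U_i$, which as written is a slip (as $W_i$ is not a $G$-module); the intended statement, used later in the form $\Hom_{GL_m}(V(\lambda),\Lambda^N(\C^n\otimes\C^m))\cong V(\lambda^\vee)$, is $\Hom_H(W_i,V)\cong U_i$ as $G$-representations. You prove the mirror statement $\Hom_G(U_i,V)\cong W_i$; since you also establish that the $W_i$ are pairwise non-isomorphic, the symmetric claim follows at once from the decomposition $V\cong\bigoplus_i U_i\otimes W_i$, so your proof covers the proposition as intended.
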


\begin{Exercise} \label{J:ex33}
Prove this proposition. 
\end{Exercise}
 
Now we fix $N $ and consider $ \Lambda^N(\C^n \otimes \C^m)$.  This vector space has actions of $ GL_n $ and $ GL_m $ coming from their actions on $ \C^n \otimes \C^m $. In fact these actions of $ GL_n $ and $ GL_m $ are Howe dual.  We have the following result.
 
\begin{Theorem} \label{J:th17}
\begin{equation*}
\Lambda^N(\C^n \otimes \C^m) \cong \bigoplus_{\lambda \in \N_m^+ , |\lambda | =  N, \lambda_i \le n} V({\lambda^\vee}) \otimes V(\lambda)
\end{equation*}
\end{Theorem}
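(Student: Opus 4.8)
The strategy is to combine the Howe duality of the $GL_n$- and $GL_m$-actions with Proposition~\ref{J:th16}, and then to pin down the individual summands by locating the joint highest weight vectors.

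First I would apply Proposition~\ref{J:th16}: since the two actions on $V := \Lambda^N(\C^n \otimes \C^m)$ commute and are Howe dual, we get a multiplicity-free decomposition $V \cong \bigoplus_{(\mu,\kappa)\in\Sigma} V(\mu)\otimes V(\kappa)$ for a finite set $\Sigma$ of pairs (with $\mu$ a dominant weight of $GL_n$ and $\kappa$ one of $GL_m$), the first coordinates pairwise distinct and likewise the second. In such a decomposition the joint highest weight vectors --- those annihilated by the upper-triangular unipotent subgroups of both $GL_n$ and $GL_m$ --- are, up to scalar, in bijection with $\Sigma$, the one attached to $(\mu,\kappa)$ having weight $(\mu,\kappa)$. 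So it suffices to enumerate the joint highest weight vectors of $V$ and record their weights.

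Next I would compute in the monomial basis. Write $w_S := \bigwedge_{(i,j)\in S}(e_i \otimes f_j)$ for $S \subseteq \{1,\dots,n\}\times\{1,\dots,m\}$ with $|S| = N$, and identify $S$ with its $0$--$1$ indicator matrix; then the weight of $w_S$ is the pair (vector of row sums, vector of column sums). The Chevalley generator $E_i$ of $\gl_n$ acts on $V$ as a derivation sending $e_{i+1}\otimes f_j \mapsto e_i\otimes f_j$ and killing the other basis vectors, and the raising generators of $\gl_m$ move column $j+1$ into column $j$. For a partition $\rho$ with at most $n$ parts, each of size $\le m$, and $|\rho| = N$, let $w_\rho$ be the monomial indexed by the Young diagram $D_\rho = \{(i,j) : j \le \rho_i\}$. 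A direct check shows every raising move on $w_\rho$ either lands in a box of $D_\rho$ (producing a repeated wedge factor, hence $0$) or in an empty row or column, so $w_\rho$ is a joint highest weight vector, and its weight is $(\rho,\rho^\vee)$ because the row sums of $D_\rho$ are $\rho$ and the column sums are the conjugate $\rho^\vee$. Hence $V(\rho)\otimes V(\rho^\vee)$ occurs in $V$ for every such $\rho$.

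The main obstacle is the converse --- that there are no further summands. Equivalently, for dominant $(\mu,\kappa)$ the joint highest weight space in weight $(\mu,\kappa)$ must vanish unless $\kappa = \mu^\vee$ with $\mu$ inside the $n\times m$ box, in which case it is one-dimensional. I would deduce this from a Gale--Ryser-type statement: a $0$--$1$ matrix with row sums the partition $\mu$ and column sums $\kappa$ exists only if $\kappa$ is dominated by $\mu^\vee$, and in the extremal case $\kappa = \mu^\vee$ the diagram $D_\mu$ is the unique such matrix; a leading-monomial argument (the maximal monomial occurring in a joint highest weight vector is forced to be a Young diagram) then closes the remaining cases. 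Alternatively one can finish purely with characters: the $GL_n\times GL_m$-character of $V$ is the degree-$N$ component of $\prod_{i,j}(1+x_iy_j)$, and the dual Cauchy identity $\prod_{i,j}(1+x_iy_j) = \sum_\rho s_\rho(x_1,\dots,x_n)\,s_{\rho^\vee}(y_1,\dots,y_m)$ identifies this with $\sum_\rho \chi_{V(\rho)}\chi_{V(\rho^\vee)}$, so Theorem~\ref{J:th2}(ii) gives the isomorphism. Either way the summands are exactly the $V(\rho)\otimes V(\rho^\vee)$ with $\rho$ inside the $n\times m$ box and $|\rho| = N$; re-indexing by $\lambda := \rho^\vee$, which runs bijectively over the $GL_m$-dominant weights with $|\lambda| = N$ and all parts $\le n$, puts the decomposition in the stated form.
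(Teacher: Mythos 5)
The paper states Theorem~\ref{J:th17} without proof (it is classical skew Howe duality), so there is no in-paper argument to compare against; judged on its own, your proof is correct and follows one of the two standard routes. The reduction via Proposition~\ref{J:th16} to enumerating joint highest weight vectors is sound, and your check that the Young-diagram monomial $w_\rho$ is killed by all raising operators of both $\gl_n$ and $\gl_m$ is the right computation --- though the clause ``or in an empty row or column'' is phrased backwards: if a raising move landed in a position outside $D_\rho$ the resulting term would survive, so the point you need (and which does hold, by $\rho_i \ge \rho_{i+1}$ and $\rho^\vee_j \ge \rho^\vee_{j+1}$) is that every raised factor collides with a box already in $D_\rho$, the only other possibility being that there is no factor to raise. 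For the converse, the character argument is the one that actually completes the proof: the $GL_n\times GL_m$-character of $\Lambda^N(\C^n\otimes\C^m)$ is the degree-$N$ part of $\prod_{i,j}(1+x_iy_j)$, and the dual Cauchy identity, together with the linear independence of the products $s_\mu(x)s_{\kappa}(y)$ of Schur polynomials, pins down the multiplicity-free decomposition; note only that Theorem~\ref{J:th2}(ii) is stated for a single $GL_n$, so you should either invoke its evident analogue for $GL_n\times GL_m$ (same proof, restricting to the product of the two maximal tori) or compare characters directly against the decomposition already supplied by Proposition~\ref{J:th16}. The Gale--Ryser/leading-monomial alternative is plausible but only sketched; as written, rely on the character route. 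Finally, your re-indexing $\lambda := \rho^\vee$ correctly matches the index set $\lambda \in \N^m_+$, $|\lambda| = N$, $\lambda_i \le n$ in the statement.
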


Here  $ \lambda^\vee $ is the conjugate of $ \lambda $ as defined in section \ref{J:se15}.  The Young diagrams of these $ \lambda $ fit inside a $n \times m $ box.

In particular this theorem shows that 
\begin{equation} \label{eq:fromSkewHowe}
\Hom_{GL_m}(V(\lambda), \Lambda^N(\C^n \otimes \C^m)) \cong V({\lambda^\vee})
\end{equation}
as representations of $GL_n $.

\begin{Example} \label{J:ex34}
Consider $ \Lambda^2(\C^2 \otimes \C^3)$.  By the theorem we have
\begin{equation*}
\Lambda^2(\C^2 \otimes \C^3) \cong V(2,1) \otimes V(2,1,0) \oplus V(3,0) \otimes V(1,1,1)
\end{equation*}
Alternatively, we can see that as $ GL_3$ representations
\begin{equation*}
\begin{aligned}
\Lambda^2(\C^2 \otimes \C^3) &\cong \Lambda^3(\C^3 \oplus \C^3) \\
&\cong \Lambda^3 \C^3 \oplus (\Lambda^2\C^3 \otimes \C^3) \oplus (\C^3 \otimes \Lambda^2\C^3) \oplus \Lambda^3 \C^3 \\
&\cong V(1,1,1) \oplus V(2,1,0) \oplus V(1,1,1) \oplus V(2,1,0) \oplus V(1,1,1) \oplus V(1,1,1).
\end{aligned}
\end{equation*}
So we see $ 4 = \dim V_{(3,0)} $ copies of $ V_{(1,1,1)} $ and $2 = \dim V_{(2,1)}$ copies of $ V_{(2,1,0)} $ as expected.
\end{Example}

\subsection{Deriving the Ginzburg construction} \label{J:se26}
We begin by noting that as $ GL_m $ representations, we have
\begin{equation*}
\Lambda^N(\C^n \otimes \C^m) \cong \Lambda^N(\C^m \oplus \cdots \oplus \C^m) 
\end{equation*}
and now we can expand out the right hand side using a ``binomial formula'' to deduce that
\begin{equation*}
\Lambda^N(\C^n \otimes \C^m) \cong \bigoplus_{\mu \in \N^n, | \mu| = N} \Lambda^{\mu_1} \C^m \otimes \cdots \otimes \Lambda^{\mu_n} \C^m 
\end{equation*}
Combining this with (\ref{eq:fromSkewHowe}), we learn that there is an isomorphism of $ GL_n $ representations
\begin{equation*}
\bigoplus_{\mu \in \N^n, |\mu| = N} \Hom_{GL_m}(V_\lambda,  \Lambda^{\mu_1} \C^m \otimes \cdots \otimes \Lambda^{\mu_n} \C^m ) \cong V({\lambda^\vee})
\end{equation*}
Moreover, we note that the direct sum decomposition on the left hand side matches the weight decomposition of the right hand side.

Now, we apply Corollary \ref{J:thgeomSatcor} to deduce that
\begin{equation*}
V({\lambda^\vee}) \cong \bigoplus_\mu H_\tp(m_{(\omega_{\mu_1}, \dots, \omega_{\mu_n})}^{-1}(L_\lambda))
\end{equation*}
(here we use that $ \Lambda^k \C^m = V({\omega_k}) $).

Let us carefully examine the last variety
\begin{equation*}
\begin{aligned}
m_{(\omega_{\mu_1}, \dots, \omega_{\mu_n})}^{-1}(&L_\lambda) = \{ 0 = L_0 \subset L_1 \subset \cdots \subset L_n = L_\lambda : \\
&X L_i \subset L_i, X|_{L_i/L_{i-1}} \text{ has Jordan type } \omega_{\mu_i} \}
\end{aligned}
\end{equation*}
However, since $ \omega_k = (1,\dots, 1, 0, \dots, 0) $, having Jordan type $ \omega_k $ is the same thing as being the zero matrix on a space of dimension $ k $.  So we see that we can rewrite the above condition as $ XL_i\subset L_{i-1} $ and $ \dim L_i/L_{i-1} = \mu_i $.  Hence we conclude that 
\begin{equation*}
 m_{(\omega_{\mu_1}, \dots, \omega_{\mu_n})}^{-1}(L_\lambda) = \Fl_\mu(L_\lambda)^X
 \end{equation*}
 
Thus using the geometric Satake correspondence and the skew Howe duality we have proven the following.
\begin{Theorem} \label{J:th18}
Identifying $ L_\lambda $ with $ \C^N$, we find that $ \bigoplus_{\mu \in \N^n, |\mu|= N} H_\tp(\Fl_\mu(\C^N)^X) $ has an action of $ GL_n $ and is isomorphic to $ V({\lambda^\vee})$.
\end{Theorem}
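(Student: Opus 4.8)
The plan is to assemble the theorem from the ingredients already developed in Sections \ref{J:se19}--\ref{J:se26}; essentially all the work has been done, and what remains is to track the identifications carefully. First I would recall from skew-Howe duality (Theorem \ref{J:th17} and equation (\ref{eq:fromSkewHowe})) that $\Hom_{GL_m}(V(\lambda), \Lambda^N(\C^n \otimes \C^m)) \cong V(\lambda^\vee)$ as $GL_n$-representations. Next, I would use the ``binomial'' expansion $\Lambda^N(\C^n \otimes \C^m) \cong \bigoplus_{\mu \in \N^n, |\mu|=N} \Lambda^{\mu_1}\C^m \otimes \cdots \otimes \Lambda^{\mu_n}\C^m$, which is an isomorphism of $GL_m$-representations that is also $GL_n$-equivariant once one checks how the diagonal $GL_n$ acts on the summands --- the point being that the $GL_n$-action permutes and scales the tensor factors according to the weight $\mu$, so that the summand indexed by $\mu$ is precisely the $\mu$-weight space for the torus of $GL_n$ (and, after taking $\Hom_{GL_m}(V(\lambda),-)$, the $\mu$-weight space of $V(\lambda^\vee)$). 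Combining these two facts gives
\begin{equation*}
V(\lambda^\vee) \cong \bigoplus_{\mu \in \N^n, |\mu|=N} \Hom_{GL_m}\big(V(\lambda), \Lambda^{\mu_1}\C^m \otimes \cdots \otimes \Lambda^{\mu_n}\C^m\big)
\end{equation*}
as $GL_n$-representations, with the direct sum decomposition realizing the weight-space decomposition.

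The second main step is to feed the right-hand side into Corollary \ref{J:thgeomSatcor}. Since $\Lambda^k \C^m = V(\omega_k)$, applying the corollary with $\lambda^i = \omega_{\mu_i}$ identifies $\Hom_{GL_m}(V(\lambda), \Lambda^{\mu_1}\C^m \otimes \cdots \otimes \Lambda^{\mu_n}\C^m)$ with $H_\tp(m_{(\omega_{\mu_1}, \dots, \omega_{\mu_n})}^{-1}(L_\lambda))$. The third step is the geometric identification of this fibre: unwinding the definition of $\overline{\Gr^{\omega_{\mu_1}, \dots, \omega_{\mu_n}}}$, a point of $m_{(\omega_{\mu_1}, \dots, \omega_{\mu_n})}^{-1}(L_\lambda)$ is a flag $0 = L_0 \subset L_1 \subset \cdots \subset L_n = L_\lambda$ with $XL_i \subset L_i$ and $X|_{L_i/L_{i-1}}$ of Jordan type $\omega_{\mu_i}$. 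Because $\omega_k = (1,\dots,1,0,\dots,0)$, having Jordan type $\omega_k$ forces $X$ to act as $0$ on $L_i/L_{i-1}$ and $\dim(L_i/L_{i-1}) = \mu_i$; the first condition is exactly $XL_i \subset L_{i-1}$. Thus the fibre is canonically $\Fl_\mu(L_\lambda)^X$, and identifying $L_\lambda \cong \C^N$ via a basis compatible with $X$ yields $\bigoplus_\mu H_\tp(\Fl_\mu(\C^N)^X) \cong V(\lambda^\vee)$ as $GL_n$-representations.

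I expect the main obstacle to be bookkeeping rather than anything deep: one must verify that the $GL_n$-action transported across Corollary \ref{J:thgeomSatcor} --- which a priori is just a vector-space isomorphism of $\Hom$-spaces with stalks of the semismall pushforward --- actually agrees with the $GL_n$-action on $V(\lambda^\vee)$ coming from skew-Howe duality, and in particular that the Chevalley generators $E_i, F_i$ act by the convolution operators $Z_i$ of the Ginzburg construction. This requires unpacking the tensor structure on $\perv(\Gr^\ge)$ (Theorems \ref{J:th14}, \ref{J:th15}) and checking that the correspondences $\overline{\Gr^{\lambda^1,\dots,\lambda^n}}$ restrict, over $L_\lambda$, to the incidence varieties $Z_i$ defining $E_i, F_i$ in (\ref{eq:EiGinz}); one also needs that the binomial decomposition above is genuinely $GL_n$-equivariant and matches weight spaces, which is a direct but slightly fiddly computation with the $GL_n \times GL_m$ action on $\Lambda^N(\C^n \otimes \C^m)$. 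None of these steps is hard individually, but care is needed to see that all the identifications are compatible; modulo that compatibility check the theorem follows immediately from the results already assembled.
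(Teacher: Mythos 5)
Your proposal is correct and follows essentially the same route as the paper: the binomial expansion of $\Lambda^N(\C^n\otimes\C^m)$ combined with skew-Howe duality (\ref{eq:fromSkewHowe}), then Corollary \ref{J:thgeomSatcor} applied with $\lambda^i=\omega_{\mu_i}$, and finally the identification of the fibre $m_{(\omega_{\mu_1},\dots,\omega_{\mu_n})}^{-1}(L_\lambda)$ with the Springer fibre $\Fl_\mu(L_\lambda)^X$. The compatibility concern you raise at the end is not actually needed for this theorem, since the $GL_n$-action is simply transported across these identifications; showing that it agrees with the convolution operators built from the $Z_i$ in the Ginzburg construction is the separate content of Exercise \ref{J:ex35}.
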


Since the Jordan type of $ X $ on $ L_\lambda $ is $ \lambda $, this reproves the main theorem of the Ginzburg construction.

One might ask if these two constructions are the same.  We have constructed two (possibly different) actions of $ GL_n $ on $ \bigoplus_{\mu \in \N^n, |\mu|= N} H_\tp(\Fl_\mu(\C^N)^X) $.  In the first, we defined the action of the Chevalley generators using the varieties $ Z_i $.  In the second we worked more abstractly using the geometric Satake correspondence and skew Howe duality to define an action of the entire $ GL_n $ at once.  

\begin{Exercise} \label{J:ex35}
Prove that these two action coincide.  Hint: first, we can see easily that the weight decompositions coincide.  So it remains to see that in our second version of the story, the Chevalley generators act using the varieties $ Z_i $. This can be done by expressing the action of $ E_i $ on $$ \Hom_{GL_m}(V(\lambda),  \Lambda^{\mu_1} \C^m \otimes \cdots \otimes \Lambda^{\mu_n} \C^m ) \cong V_{\lambda^\vee}(\mu)$$ as the composition with basic $ GL_m $-equivariant maps $$ \Lambda^k \C^m \otimes \Lambda^l \C^m \rightarrow \Lambda^{k-1} \C^m \otimes \C^m \otimes \Lambda^l \C^m \rightarrow \Lambda^{k-1} \C^m \otimes \Lambda^{l+1} \C^m, $$
and then expressing these basic maps using the geometric Satake correspondence.
\end{Exercise}

To conclude, in this section, we have seen a geometric incarnation of skew-Howe duality, where one group acts by the Ginzburg construction and the other group acts by the geometric Satake construction\footnote{A geometric manifestation of symmetric-Howe duality was acheived earlier by Wang \cite{W}, using the Ginzburg construction on both sides.}.

\bibliography{schoolfinal}{}
\bibliographystyle{plain}

\end{document}